\newtheorem{theorem}{Theorem}[section]
\newtheorem{proposition}[theorem]{Proposition}
\newtheorem{lemma}[theorem]{Lemma}
\newtheorem{corollary}[theorem]{Corollary}
\newtheorem{question}[theorem]{Question}
\newtheorem{example}[theorem]{Example}
\theoremstyle{remark}
\newtheorem{remark}[theorem]{Remark}
\begin{document}
\title[Suitable sets for paratopological groups]
{Suitable sets for paratopological groups}

\author{Fucai Lin*}
\address{Fucai Lin: 1. School of mathematics and statistics,
Minnan Normal University, Zhangzhou 363000, P. R. China; 2. Fujian Key Laboratory of Granular Computing and Application, Minnan Normal University, Zhangzhou 363000, China}
\email{linfucai@mnnu.edu.cn; linfucai2008@aliyun.com}

\author{Alex Ravsky}
\address{Alex Ravsky: Pidstryhach Institute for Applied Problems of Mechanics and Mathematics,
3b Naukova str., 79060, Lviv, Ukraine}
\email{alexander.ravsky@uni-wuerzburg.de}

\author{Tingting Shi}
\address{Tingting Shi: School of mathematics and statistics,
Minnan Normal University, Zhangzhou 363000, P. R. China}
\email{277653220@qq.com}

\thanks{The first author is supported by the Key Program of the Natural Science Foundation of Fujian Province (No: 2020J02043), the NSFC (No. 11571158), the lab of Granular Computing, the Institute of Meteorological Big Data-Digital Fujian and Fujian Key Laboratory of Data Science and Statistics.}
\thanks{* Corresponding author}

\keywords{paratopological group, suitable set, saturated paratopological group, topological group}
\subjclass[2020]{22A15, 54H99, 54H11}

\date{\today}
\begin{abstract}
A paratopological group $G$ has a {\it suitable set} $S$.
The latter means that $S$ is a discrete subspace of $G$, $S\cup \{e\}$ is closed, and
the subgroup $\langle S\rangle$ of $G$ generated by $S$ is dense in $G$.
Suitable sets in topological groups were studied by many authors.
The aim of the present paper is to provide a start-up for a general investigation
of suitable sets for paratopological groups,
looking to what extent we can (by proving propositions)
or cannot (by constructing examples)
generalize to paratopological groups results which hold for
topological groups, and to pose a few challenging questions for possible future research.
We shall discuss when paratopological groups of
different classes have suitable sets. Namely,
we consider paratopological groups (in particular, countable)
satisfying different separation axioms,
paratopological groups which are compact-like spaces,
and saturated (in particular, precompact) paratopological groups.
Also we consider the permanence of a property of a group to have a suitable set
with respect to (open or dense) subgroups, products and extensions.
\end{abstract}

\maketitle

\section{Introduction}

A {\em paratopological group} is a group $G$ endowed with a topology $\tau$
such that the group operation $\cdot:G\times G\to G$ is continuous.
In this case $\tau$ is called a {\it semigroup topology} on $G$.
If, additionally, the operation of taking inverse is continuous
then $G$ is a {\it topological group}.
A classical example of a paratopological group failing to be a
topological group is the {\it Sorgenfrey line} $\mathbb S$, that is the
additive group of real numbers, endowed with the Sorgenfrey topology
generated by the base consisting of half-intervals $[a,b)$, $a<b$.

Whereas the investigation of topological groups already is a fundamental branch of topological
algebra (see, for instance,~\cite{Pon},~\cite{DikProSto}, and~\cite{AT}),
paratopological groups are not so well-studied and have more variable structure.
Basic properties of paratopological groups compared with
the properties of topological groups are described in the book \cite{AT} by Arhangel'skii
and Tkachenko, in the PhD thesis of the second author~\cite{Rav3},
the papers~\cite{Rav}, ~\cite{Rav2}, and in the survey \cite{Tka5} by Tkachenko.

Suitable sets were considered in the context of Galois cohomology by Tate (see~\cite{Doa}) and in
the context of free profinite groups by Mel'nikov~\cite{Mel}. Later Hofmann
and Morris promoted the concept of suitable set in their seminal paper~\cite{HM1990} and in their
well-known monograph on compact groups.
Since then many authors studied suitable sets in topological groups,
see, for instance, ~\cite{CG-F}, \cite{Tka97}.
Fundamental results were obtained by Comfort et al. in~\cite{CMRS1998}
and Dikranjan et al. in~\cite{DTT1999} and in~\cite{DTT2000}.
Many examples of topological groups without suitable sets were constructed in~\cite{Tka97}.

As far as we know, the first few results on suitable sets for paratopological groups
were obtained by Guran, see~\cite{G2003} and Theorem 13.3 of ~\cite{BP2003}.
The aim of the present paper is to provide a start-up for a general investigation
of this topic, looking to what extent we can (by proving propositions)
or cannot (by constructing examples)
generalize to paratopological groups results which hold for
topological groups, and to pose a few challenging questions for possible future research.

A subset $S$ of a paratopological group $G$ is a {\it suitable set} for $G$, if
$S$ is a discrete subspace of $G$, $S\cup \{e\}$ is closed, and
the subgroup $\langle S\rangle$ of $G$ generated by $S$ is dense in $G$, see~\cite{G2003}.
Let $\mathcal{S}$ (respectively, $\mathcal{S}_{c}$) be the class of paratopological groups $G$
having a suitable (respectively, closed suitable) set.
It turns out that very often a suitable set of a group $G$ generates $G$.
This fact suggests to devote a special attention to classes
$\mathcal{S}_g$ (respectively, $\mathcal{S}_{cg}$) of paratopological groups $G$
having a suitable (respectively, closed suitable) set which generates $G$.

In the paper we shall discuss when paratopological groups of
different classes have suitable sets.
Whereas the constructed examples are usually specific
for paratopological groups, a lot of presented positive results are counterparts
of those from papers ~\cite{CMRS1998} and~\cite{DTT1999}.
Nevertheless Propositions~\ref{prop:CCnotCS}, \ref{p0new}, \ref{t2}, \ref{t2.2},
and \ref{t2.3} can be new for topological groups.

Section~\ref{sec:sep} is devoted to paratopological groups (in particular, to countable)
satisfying different separation axioms.
Generalizing Proposition 1.4 from~\cite{CMRS1998},
in Proposition~\ref{tt} we show that any paratopological group with a suitable set
is a $T_1$-space or a two-element group.
In Examples~\ref{ex:ZT1notT2} and~\ref{ex:T1nS}
are provided infinite $T_1$ non-Hausdorff paratopological groups
with and without a suitable set, respectively.

%we consider relations between
% imposed on a paratopological
%group and its possibility to have a suitable set.
%In particular, in Section
%~\ref{sec:countable}, we discuss a question:
%does each countable paratopological group have a suitable set?
Section~\ref{sec:compactlike} is devoted to paratopological groups which are compact-like spaces.
We observe that Theorem 3.2 from~\cite{DTT1999} can be generalized to paratopological groups.
That is, if $\kappa$ is an infinite non-measurable cardinal and $\lambda=2^{\kappa}$ and
$G$ is a $T_1$ paratopological group such that $d(G)\le\lambda$ and
the group $G^\lambda$ is not countably compact then $G^\lambda$ has a closed suitable set.
In Proposition~\ref{prop:CCnotCS} we show
that if $G$ is a countably compact infinite locally finite $T_1$ paratopological group without non-trivial convergent
sequences then $G$ has no suitable set.
In Example~\ref{e1} we show that a non-Hausdorff $T_1$ periodic countable sequentially pracompact
Abelian paratopological group constructed by Banakh in~\cite[Example 3.18]{BR2020}
belongs to $\mathcal{S}_{cg}$.
Complementing Proposition~\ref{prop:CCnotCS}, in Example~\ref{e2} we show that
a feebly compact non-countably compact Baire Hausdorff paratopological group
such that all its countable subsets are closed,
constructed by Sanchis and Tkachenko in the proof of~\cite[Theorem
2]{ST2012}, belongs to $\mathcal{S}_{cg}$.
Extending Proposition 2.7 from~\cite{DTT1999}, in
Proposition~\ref{p0new} we show that if $G$ is a countably compact $T_1$ paratopological group
with a suitable set, $H$ is a Hausdorff paratopological group, and $f:G\to H$ be a
continuous homomorphism with dense image then $H$ has a suitable set.

Section~\ref{sec:saturated} is devoted to saturated (in particular, to precompact)
paratopological groups. In Proposition~\ref{t4} we show
that if $G$ is a saturated Hausdorff paratopological group and $S$ be a (closed) suitable
set for a the group reflexion $G^{\flat}$ of $G$ then $S$ is a (closed) suitable set for a group
$G$ too.
Guran in~\cite{G2003} announced that every Hausdorff separable non-precompact
paratopological group $G$ has a closed suitable set.
Complementing this, in Proposition~\ref{l15+} we show that any
non-feebly compact precompact separable $T_1$ paratopological group
has a closed suitable set.
Also we show that each Hausdorff locally separable saturated first countable
(non-pre\-com\-pact) paratopological group has a (closed) suitable set,
see Corollary~\ref{c5+}.
In Example~\ref{ex:GflatwoSS} we provide
a zero-dimensional saturated $T_1$ paratopological group $G$
which is a sum of two of its closed discrete subsets such that the group reflexion
$G^{\flat}$ has no suitable set.
Adapting Theorem 3.6.I from~\cite{DTT1999},
in Proposition~\ref{ttt+} we show that
every non-feebly compact non-precompact saturated $T_1$ paratopological group $G$ with
a dense strictly $\sigma$-discrete subspace has a closed suitable set.
In Proposition~\ref{qdpart+} we show that every non-precompact saturated $T_1$ paratopological group
which is a $\sigma$-space has a suitable set.

Section~\ref{sec:perm} is devoted to
the permanence of a property of a group to have a suitable set
with respect to (open or dense) subgroups, products and extensions.
We observe that Theorem 4.2 from~\cite{CMRS1998} can be extended to paratopological groups,
that is if an open subgroup of a $T_1$ paratopological group $G$
has a (closed) suitable set, then $G$ has a (closed) suitable set.
In Section~\ref{sec:perm2}
we show, in particular, that if a collectionwise Hausdorff paratopological group $G$ has a suitable set $S$ and
$H$ is a (sequentially) dense subgroup of $G$ then $H$ has a suitable set of size at most
$|S|\cdot\aleph_0$, if $G$ is hereditarily collectionwise Hausdorff, see Proposition~\ref{t2}
or regular with countable pseudocharater, see Proposition~\ref{t2.3}.
In Section~\ref{sec:perm3} we observe the following. Theorem 4.3 from~\cite{CMRS1998}
can be generalized to paratopological groups as follows.
Let $\Gamma$ be a non-empty family of $T_1$ paratopological groups
such that each member of it has a suitable set. Then the product of $\Gamma$
has a suitable set contained in the $\sigma$-product of $\Gamma$. It follows
that both the $\sigma$- and $\Sigma$-product of $\Gamma$ has a suitable set.
Lemma 3.1 from~\cite{DTT1999}
can be generalized to paratopological groups as follows.
If a $T_1$ paratopological group $G$ contains a closed discrete set $A$
such that $|A|\geq d(G)$ then $G\times G\in \mathcal{S}_{c}$.
Moreover, if $|A|=|G|$ then $G\times G\in \mathcal{S}_{cg}$.
Complementing Proposition~\ref{l1} and extending Lemma 2.3 from ~\cite{DTT1999} to
paratopological groups, in Proposition~\ref{prop:DPTGSS} we show that
if $G$ is a $T_1$ paratopological group with
a suitable set then $d(G)\leq \psi(G)e(G)$. Moreover,
if $G$ has a closed suitable set then $d(G)\le e(G)$.

\section{Separation axioms in paratopological groups}\label{sec:sep}

These axioms describe specific structural properties of spaces.
Basic separation axioms and relations between them are considered in~\cite[Section 1.5]{Eng}.

It is well-known that  each topological group is completely regular.
On the other hand, simple examples show that none of the implications
$T_0\Rightarrow T_1 \Rightarrow T_2 \Rightarrow T_3$ holds for paratopological groups
(see \cite[Examples 1.6-1.8]{Rav} and
page 5 in any of papers \cite{Rav2} or \cite{Tka5})
and there are only a few backwards implications between different separation axioms, see
~\cite[Section 1]{Rav2} or~\cite[Section 2]{Tka5}.
On the other hand, Banakh and Ravsky proved in~\cite{BR2016} that every $T_0$ (semi)regular
paratopological group is Tychonoff.

The following proposition generalizes Proposition 1.4 from~\cite{CMRS1998} to paratopological
groups.

\begin{proposition}\label{tt} Any paratopological group with a suitable set
is a $T_1$-space or a two-element group.
\end{proposition}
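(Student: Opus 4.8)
The plan is to prove the dichotomy directly: assuming $G$ is not a $T_1$-space, I would show that $G$ has exactly two elements. Since the left translations of a paratopological group are homeomorphisms, $\overline{\{x\}}=x\overline{\{e\}}$ for every $x\in G$, so $G$ fails to be $T_1$ precisely when $N:=\overline{\{e\}}\neq\{e\}$. First I would record two structural facts about $N$. Using only continuity of multiplication, if $x,y\in N$ and $W$ is a neighbourhood of $xy$, then choosing neighbourhoods $U\ni x$, $V\ni y$ with $UV\subseteq W$ and noting $e\in U\cap V$ gives $e\in UV\subseteq W$, so $xy\in N$; hence $N$ is a subsemigroup of $G$. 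Second, because $S\cup\{e\}$ is closed and contains $e$, we have $N=\overline{\{e\}}\subseteq S\cup\{e\}$, so $N\setminus\{e\}\subseteq S$.

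Next I would exploit discreteness of $S$ to pin down $N$. For $a\in N\setminus\{e\}$ one has $a^2=a\cdot a\in aN=\overline{\{a\}}$, so every neighbourhood of $a^2$ contains $a$; if $a$ had order at least $3$, then $a^2\in N\setminus\{e\}\subseteq S$ would be a point of $S$ distinct from $a\in S$ each of whose neighbourhoods meets $S$ in $a$ as well, contradicting discreteness of $S$. Hence every element of $N\setminus\{e\}$ has order $2$. The same idea rules out two distinct nontrivial elements $a,b\in N\setminus\{e\}$: then $ab\in N$, $ab\neq e$ (else $b=a^{-1}=a$), and $ab\in aN=\overline{\{a\}}$, so once more a point $ab\in S$ has $a$ in each of its neighbourhoods while $a\neq ab$ lies in $S$, which is impossible. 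Therefore $N=\{e,a\}$ with $a^2=e$ and $a\in S$.

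Finally I would upgrade ``$N$ is a two-element group'' to ``$G$ is one''. Put $C:=S\cup\{e\}$. Since $C$ is closed, $\overline{\{x\}}=xN\subseteq C$ for each $x\in C$, whence $CN=C$; as $N=\{e,a\}$ and $a^2=e$, this gives $Ca=C$, i.e. $Sa\cup\{a\}=S\cup\{e\}$. Now for any $s\in S$ the point $sa$ lies in $\overline{\{s\}}=\{s,sa\}$, so every neighbourhood of $sa$ contains $s$; by discreteness $sa\notin S$, since an isolating neighbourhood of $sa$ would otherwise also contain the distinct point $s\in S$. Combining $sa\in Sa\subseteq S\cup\{e\}$ with $sa\notin S$ forces $sa=e$, that is $s=a$. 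Thus $S=\{a\}$, so $\langle S\rangle=\{e,a\}=N$, and density of $\langle S\rangle$ together with the fact that $N$ is closed yields $G=\overline{\langle S\rangle}=N=\{e,a\}$.

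I expect the genuine obstacle to be the last paragraph rather than the reduction: it is quick to see that $\overline{\{e\}}$ must be the two-element group, but to conclude that all of $G$ collapses one must feed the closedness of $S\cup\{e\}$ back in (through $Ca=C$) to show that the suitable set cannot contain any point outside $N$, and only then invoke density. The two-element exception is genuinely needed, as the indiscrete group of order two is non-$T_1$ yet admits $\{a\}$ as a suitable set.
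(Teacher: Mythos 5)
Your proof is correct and uses essentially the same ingredients as the paper's: translation homogeneity of point closures, the inclusion of $\overline{\{e\}}$ (equivalently of $\overline{\{s\}}$ for $s\in S$) into the closed set $S\cup\{e\}$, discreteness of $S$ to cap the closure at two points, the computation forcing $a^2=e$, and density of $\langle S\rangle$ to collapse $G$. You merely organize the argument around $N=\overline{\{e\}}$ instead of $\overline{\{s\}}$ and spell out in full (via $Ca=C$) the final step showing $S$ contains no further points, which the paper leaves implicit in the line $G=\overline{\langle S\rangle}=\{s,e\}$.
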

\begin{proof} Let $S$ be a suitable set of a paratopological group $G$ and $s$ be any element of $S$.
The definition of a suitable set implies $\overline{\{s\}}\subset \{s,e\}$.
If $\overline{\{s\}}$ or $\overline{\{e\}}$ is a one-point set then $G$ is a $T_1$-space.
Otherwise $\overline{\{s\}}=\overline{\{e\}}=\{s,e\}$. It follows $s^2\in \overline{\{s\}}$,
so $s^2=e$ and $G=\overline{\langle S\rangle}=\{s,e\}$.
\end{proof}

By Proposition~\ref{tt}, any topological group with at least three elements and a suitable set
is a Tychonoff space. On the other hand, the following simple example shows that
an infinite paratopological group with a suitable set can fail to be Hausdorff.

\begin{example}\label{ex:ZT1notT2} Let $\vec{\mathbb Z}$ be a group $\mathbb Z$
endowed with a base $\{U_{n,m}:n,m\in\mathbb Z\}$, where $U_{n,m}=
\{k\in\mathbb Z:k=n\mbox{ or } k\ge m\}$ for each $n,m\in\mathbb Z$.
Then $\vec{\mathbb Z}$ is a non-Hausdorff paratopological group with a suitable set $\{-1,0\}$.
\end{example}

{\it All spaces below are supposed to be $T_{1}$}, if the opposite is not stated.

According to~\cite[5.3]{BR2020}, topologies $\tau$ and $\sigma$ on a set $X$ are defined to be
{\it cowide} if for any
nonempty sets $U\in\tau$ and $V\in\sigma$ an intersection $U\cap V$ is nonempty. A
topology cowide to itself is called {\it wide}.
The spaces with wide topology are well-known as {\it irreducible} spaces,
see~\cite{AM},~\cite{mDon}.

A subset $U$ of a space is {\it regular open}, if $U=\operatorname{int}\overline{U}$.
Stone \cite{Sto} and Kat\v{e}tov \cite{Kat} considered the topology $\tau_{sr}$ on a
space $(X,\tau)$,
generated by a base consisting of all regular open sets of the space $(X,\tau)$. The space
$(X,\tau_{sr})$ is called the {\it semiregularization} of the space $(X,\tau)$.
It is easy to show that the semiregularization of a Hausdorff space is Hausdorff.
Moreover, a semiregularization of a Hausdorff paratopological group is a regular paratopological group,
see~\cite[Example 1.9]{Rav2}, \cite[p. 31]{Rav3} or \cite[p. 28]{Rav3}.

\begin{example}\label{ex:T1nS} There exists a non-Hausdorff paratopological group without a suitable set.
There exists a paratopological group generated by its closed discrete subset
such that the semiregularization of the group has no suitable set.
\end{example}
\begin{proof}
According to~\cite[Theorem 2.4.a]{DTT1999} there exists a non-separable Lindel\"of
topological linear space $(L,\tau)$ of countable pseudocharacter. The Cartesian product $G$ of $L$ and the group
$\vec{\mathbb Z}$ from Example~\ref{ex:ZT1notT2} is a non-separable Lindel\"of
paratopological group of countable
pseudocharacter. By Proposition~\ref{prop:DPTGSS}, $G$ has no suitable set.

We can consider $L$ as a linear space over a field $\mathbb Q$. Let $B$ be a basis of this space. Pick
any vector $e\in B$ and define a (unique) linear map $s:L\to\mathbb Q$ such that $s(e)=1$ and
$s(B\setminus\{e\})=0$. Let $T=\{x\in L:s(x)\ge 0\}$. Let $\sigma$ be a semigroup topology on
$L$ consisting of all sets $U\subset L$ such that for every $u\in U$
there exists $x\in T$ such that $u+x+T\subset U$, see~\cite[5.2]{BR2020}.
Let $\tau\vee\sigma$ be the supremum of topologies $\tau$ and $\sigma$;
it is generated by the base $\{U\cap V:U\in\tau, V\in\sigma\}$.
Put $S=s^{-1}(0)\cup\{e/n:n\in\mathbb N\}$. Clearly, $\langle S\rangle=L$,
and, using that $s(y)\le 1$ for each $y\in S$, we can easily show that
$S$ is closed and discrete in $(L,\sigma)$ and so in $(L,\tau\vee\sigma)$.
Since $T$ is dense in $L$, by~\cite[Proposition 5.36]{BR2020}, the topologies
$\tau$ and $\sigma$ are cowide.
Since $L=T-T$, by~\cite[Proposition 5.35]{BR2020}, the topology $\sigma$ is wide.
Thus by ~\cite[Lemma 5.31]{BR2020}, $(\tau\vee\sigma)_{sr}=\tau_{sr}=\tau$.
That is the semiregularization of the group $(L,\tau\vee\sigma)$ is $(L,\tau)$, which
has no suitable set.
\end{proof}

\subsection{Countable paratopological groups}\label{sec:countable}
Recall that a group $G$ endowed with a topology is called {\it left topological} provided
each left shift $x\mapsto gx$, $g\in G$, is a continuous map. Clearly,
each paratopological group is left topological.

By Theorem 2.2 from~\cite{CMRS1998}, each countable Hausdorff topological group is generated by
a closed discrete set. Guran generalized this result to Hausdorff left topological groups,
announced it in~\cite{G2003} (where groups are supposed to be Hausdorff)
and presented it at a seminar; Banakh presented a proof in Theorem 13.3 of ~\cite{BP2003}
\footnote{We clarified these details via a personal communication with Guran.}.
Since an arbitrary locally finite group $G$, which is not finitely generated,
endowed with a cofinite topology
(consisting of the empty set and all subsets $A$ of $G$ such that $G\setminus A$ is finite)
is a left topological group without a suitable set, the following question remains unanswered.

\begin{question}\label{qc}
Does each countable paratopological group have a suitable set?
\end{question}

\section{Paratopological groups which are compact-like spaces}\label{sec:compactlike}

Different classes of compact-like spaces and relations between them
provide a well-known investigation topic in general topology, see, for instance, basic \cite[Chap.
3]{Eng} and general \cite{Vau}, \cite{Ste}, \cite{DRRT}, \cite{Mat}, \cite{Lip} works.

In the present paper we consider the following compact-like spaces.
A space $X$ is {\it countably compact}, if each its infinite subset $A$ has an accumulation point $x$
(the latter means that each neighborhood of $x$ contains infinitely many points of the set $A$),
or, equivalently, if each countable open cover of $X$ has a finite subcover.
A space is {\it feebly compact}
if each locally finite family of its nonempty open subsets of the space is
finite. A space is pseudocompact if and only if it is Tychonoff and feebly compact,
see \cite[Theorem 3.10.22]{Eng}.
According to~\cite{GutRav}, a
space $X$ is {\it sequentially pracompact} if it contains a dense set $D$ such that
each sequence of points of the set $D$ has a convergent subsequence.
Each countably compact and each sequentially pracompact space is feebly compact.

In Sections 1.4 and 1.5 of~\cite{BR2020} are listed different classes of
compact-like spaces and paratopological groups and considered relations between
these classes. Remark that an investigation of compact-like paratopological groups is
motivated by a problem on automatic continuity of inversion in a paratopological group.
An interested reader can find known results and references on this subject
in Section 5.1 of~\cite{Rav3} and Section 3 of the survey~\cite{Tka5},
in Introduction of~\cite{AlaSan} and in ~\cite{BR2020}.
In Section 1.6 of~\cite{BR2020} is provided a brief survey of the topic.

A cardinal $\kappa$ is called {\it measurable} if there is an $\aleph_{0}$-complete non-principal
ultrafilter on $\kappa$. It is consistent with ZFC that there are no measurable cardinals.
Moreover, under $V=L$ all cardinals are non-measurable, see~\cite{Dra74}.

\begin{proposition} Let $\kappa$ be an infinite non-measurable cardinal and $\lambda=2^{\kappa}$.
Let $G$ be a paratopological group such that $d(G)\le\lambda$.
If the group $G^\lambda$ is not countably compact then it has a closed suitable set.
\end{proposition}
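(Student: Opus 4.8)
The plan is to produce a closed discrete subset $A$ of the paratopological group $H:=G^{\lambda}$ with $|A|\ge d(H)$ and then to invoke the paratopological counterpart of Lemma~3.1 of~\cite{DTT1999}, stated in the Introduction as Proposition~\ref{l1}. First I would bound the density of $H$. Since $G$ is $T_{1}$, so is $H$, being a product of $T_{1}$ spaces. By the Hewitt--Marczewski--Pondiczery theorem $d(H)=d(G^{\lambda})\le\lambda$: taking $\tau:=\max\{\kappa,d(G)\}$ we have $d(G)\le\tau$ and $\lambda=2^{\kappa}\le 2^{\tau}$, whence $d(G^{\lambda})\le\tau\le\lambda$. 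So it suffices to exhibit a closed discrete subset of $H$ of cardinality at least $\mu:=d(H)$.

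Next I would build such a set. Since $H=G^{\lambda}$ is not countably compact, it contains a countably infinite subset with no accumulation point; in a $T_{1}$ space this is automatically closed and discrete, so we get a faithfully indexed closed discrete copy $\{q_{n}:n\in\omega\}\subseteq H$ of the countable discrete space $D(\omega)$. As $\{q_{n}:n\in\omega\}$ is closed in $H$, the subspace $\prod_{\xi<\mu}\{q_{n}:n\in\omega\}$ is a closed copy of $D(\omega)^{\mu}$ inside $H^{\mu}$, and $\lambda\cdot\mu=\lambda$ gives a topological group isomorphism $H^{\mu}\cong H$; hence it is enough to embed a closed discrete set of size $\mu$ into $D(\omega)^{\mu}$. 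This is the purely topological heart of the matter, inherited from the proof of Theorem~3.2 of~\cite{DTT1999}: for a nonmeasurable cardinal $\mu$ one has $e(D(\omega)^{\mu})\ge\mu$. The mechanism of the non-measurability hypothesis is that every free ultrafilter on the index set of the candidate family is then countably incomplete, which is exactly what forces a sufficiently spread family of points of $D(\omega)^{\mu}$ to run off to infinity along every free ultrafilter and so to have no accumulation point. Applied with $\mu=d(H)\le\lambda=2^{\kappa}$ and transported through $D(\omega)^{\mu}\subseteq H^{\mu}\cong H$, this yields a closed discrete $A\subseteq H$ with $|A|=\mu\ge d(H)$.

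To finish, $H$ is a $T_{1}$ paratopological group containing a closed discrete set $A$ with $|A|\ge d(H)$, so Proposition~\ref{l1} gives $H\times H\in\mathcal S_{c}$. Since $\lambda+\lambda=\lambda$, the groups $H\times H=G^{\lambda}\times G^{\lambda}$ and $G^{\lambda}=H$ are topologically isomorphic, whence $H=G^{\lambda}\in\mathcal S_{c}$, i.e. $G^{\lambda}$ has a closed suitable set. I expect the main obstacle to be precisely this passage from a large closed discrete set to a closed suitable set in the \emph{paratopological} setting. In a topological group one converts a closed discrete set of size $\ge d(G)$ directly into a closed suitable set of the group itself, using inversion to combine the escaping translates with a dense set; without a continuous inverse that combination is unavailable, and one only obtains a closed suitable set for the square $G\times G$, as in Proposition~\ref{l1}. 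The conclusion for $G^{\lambda}$ itself survives solely because $G^{\lambda}$ is isomorphic to its own square. A secondary point to check is that the estimate $e(D(\omega)^{\mu})\ge\mu$ is a statement about the topological space $H$ alone, using only that $H$ is a non-countably-compact $T_{1}$ space of the prescribed density together with the non-measurability of the cardinals involved, so that the set-theoretic core transfers verbatim from~\cite{DTT1999} and no algebraic structure beyond the homogeneity common to all paratopological groups is needed there.
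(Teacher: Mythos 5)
Your argument is correct and is essentially the paper's own proof: the authors simply say the proof is "similar to that of Theorem 3.2 of \cite{DTT1999}", and what you write is precisely that argument transported to paratopological groups --- bound $d(G^\lambda)\le\lambda$ by Hewitt--Marczewski--Pondiczery, extract a closed copy of $D(\omega)$ from non-countable-compactness, pass to $D(\omega)^\lambda\subseteq (G^\lambda)^\lambda\cong G^\lambda$ and use the Mr\'owka--Mycielski fact that non-measurability yields a closed discrete set of size $\ge d(G^\lambda)$, then finish with Proposition~\ref{l1} and $G^\lambda\times G^\lambda\cong G^\lambda$. Your closing remark correctly identifies why the detour through the square (rather than a direct construction of a suitable set) is the step that survives the loss of continuous inversion.
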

\begin{proof} The proof is similar to that of~\cite[Theorem 3.2]{DTT1999}.
\end{proof}

A group is {\it locally finite} if each its finitely generated subgroup is finite.
Clearly, each periodic Abelian group is locally finite.

\begin{proposition}\label{prop:CCnotCS}
Let $G$ be a countably compact infinite locally finite paratopological group without non-trivial convergent
sequences. Then $G$ has no suitable set.
\end{proposition}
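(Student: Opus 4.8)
The plan is to argue by contradiction: assuming that $S$ is a suitable set for $G$, I would manufacture a non-trivial convergent sequence and thereby contradict the hypothesis. The group structure enters only to force $S$ to be infinite; everything after that is a purely topological argument exploiting countable compactness together with the closedness of $S\cup\{e\}$. So the first step is to show that $S$ must be infinite. If $S$ were finite, then $\langle S\rangle$ would be a finitely generated subgroup of $G$, hence finite by local finiteness. Since $G$ is $T_1$, finite sets are closed, so $\langle S\rangle$ would be a closed dense subgroup and therefore equal to $G$, contradicting the fact that $G$ is infinite. Hence $S$ is infinite.

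Next I would pass to the subspace $S\cup\{e\}$. Being closed in the countably compact space $G$, it is itself countably compact. Because $S$ is discrete and $G$ is $T_1$, every point of $S$ is isolated in $S\cup\{e\}$, so $e$ is the only candidate for a non-isolated point. The crucial observation is that every infinite subset $A\subseteq S$ has $e$ as its unique accumulation point: by countable compactness $A$ has some accumulation point $p$, and $p\in\overline{A}\subseteq\overline{S\cup\{e\}}=S\cup\{e\}$; since each point of $S$ is isolated it cannot be an accumulation point of $A$, whence $p=e$.

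The final step upgrades this "unique accumulation point" to genuine convergence. Choosing a sequence $(s_n)$ of pairwise distinct points of $S$, fix any open neighbourhood $U$ of $e$ and consider $B=\{s_n:s_n\notin U\}$. Then $B$ is contained in the closed set $(S\cup\{e\})\setminus U$, which does not contain $e$. If $B$ were infinite it would, by the previous step, have $e$ as an accumulation point; but its closure is contained in $(S\cup\{e\})\setminus U$ and so avoids $e$, a contradiction. Hence $B$ is finite, so $s_n\in U$ eventually, i.e.\ $s_n\to e$ in $S\cup\{e\}$ and therefore in $G$. This $(s_n)$ is a non-trivial convergent sequence, which contradicts the hypothesis and completes the argument.

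The step carrying the real content is this passage from clustering to convergence: countable compactness is exactly what permits the conclusion that a sequence of distinct isolated points whose only accumulation point is $e$ must in fact converge to $e$. I expect this to be the main point to get right, since in a general (non–first-countable) space a unique accumulation point need not force convergence — it is the countable compactness of $S\cup\{e\}$ that makes it work. By contrast, local finiteness plays only the auxiliary role of excluding a finite suitable set, and is the sole place where the group operation is actually used.
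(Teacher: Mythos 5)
Your proposal is correct and follows essentially the same route as the paper's proof: infer that $S$ is infinite from local finiteness, observe that $S\cup\{e\}$ is a countably compact set whose only non-isolated point is $e$, and conclude that a sequence of distinct points of $S$ converges to $e$, contradicting the absence of non-trivial convergent sequences. You merely fill in the details (why $S$ is infinite, and why unique clustering at $e$ upgrades to convergence via countable compactness) that the paper leaves implicit.
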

\begin{proof}
Suppose for a contradiction that $S$ is a suitable set for $G$.
Since $G$ is locally finite, $S$ is infinite.
Since $G$ is countably compact and $S$ is a suitable set, $S\cup\{e\}$ is a countably compact
space with the only non-isolated point $e$. It follows that any sequence of distinct points
of $S$ converges to $e$, a contradiction.
\end{proof}

Proposition~\ref{prop:CCnotCS} is closely related with
the following question by van Douwen~\cite{vDou} from 1980,
which is, according to references from~\cite[p. 2]{HvMR-GS},
considered central in the theory of topological groups.

\begin{question} Is there an infinite countably compact topological group without non-trivial convergent sequences?
\end{question}

For a history of related results see the introductions of~\cite{Tom2019} and~\cite{HvMR-GS}. In particular,
in 1976 Hajnal and Juh\'asz in~\cite{HJ} constructed the first example of such a group
under the Continuum Hypothesis. In 2004 Garc\'ia-Ferreira, Tomita, and Watson in~\cite{G-FTW}
obtained such a group from a selective ultrafilter. In 2009 Szeptycki and Tomita in~\cite{ST}
showed that in the Random model there exists such a group, giving the first example
that does not depend on selective ultrafilters.
Finally, Hru\v s\'ak, van Mill, Ramos-Garc\'ia, and Shelah obtained a positive answer to
van Douwen's question in ZFC, see~\cite{HvMR-GS} for "a presentable draft of the proof".
The group built there is Boolean and at p.17 the authors remark that they do not know how
to modify (in ZFC) their construction to get a non-torsion example and formulate
the question about an existence of such a group in ZFC.

We denote by TT an axiomatic assumption on an existence of such a torsion-free Abelian
group. An example of such a group was constructed by Tkachenko~\cite{Tka}
under the Continuum Hypothesis. Later, the Continuum Hypothesis was weakened to the
Martin's Axiom for $\sigma$-centered posets by Tomita in~\cite{Tom1996}, for countable posets in
\cite{KTW}, and finally to the existence of continuum many incomparable selective
ultrafilters in \cite{MT}.

The proof of~\cite[Lemma 6.4]{BDG} implies that under TT there exists a
group topology $\tau$ on a free Abelian group $F$ generated by the set $\mathfrak c$
with the following property: for each countable infinite subset $M$ of the group $F$ there
exists an element $\alpha \in \overline M\cap\mathfrak c$ such that $M$ is contained in a
subgroup $\langle \alpha \rangle$ of $F$, generated by a set $\alpha$.
Moreover, in~\cite[Example 3.22]{BR2020}, a topology $\sigma\supset\tau$ is constructed on $F$
such that $(F,\sigma)$ is a countably compact non-regular paratopological group,
which is not a topological group and has the property mentioned above.
The latter implies that both $(F,\tau)$ and $(F,\sigma)$ have no suitable set.

\begin{example}\label{e1}
There exists a non-Hausdorff periodic countable sequentially pracompact
Abelian paratopological group $G\in\mathcal{S}_{cg}$.
\end{example}
\begin{proof} The group $G$ was constructed by Banakh in~\cite[Example 3.18]{BR2020} as follows.
For each positive integer $n$ let $C_n$ be the set $\{0,\dots,n-1\}$
endowed with a binary operation $\oplus$ such that $x\oplus y\equiv x+y \mod n$ for any $x,y\in
C_n$, that is $C_n$ is a cyclic group.
Let $G=\bigoplus\limits_{n=1}^\infty C_n$ be a direct sum of the groups $C_n$.
Let $\mathcal F$ be a family of all non-decreasing unbounded functions from $\omega\setminus\{0\}$ to
$\omega$. For each $f\in\mathcal F$ put
$$O_f=\{0\}\cup\{(x_n)\in G:\exists n\in\mathbb N\; (0<x(n)<f(n)\mbox{ and }\forall m>n\;x(m)=0)\}.$$
It is easy to check that the family $\{O_f:f\in\mathcal F\}$ is a base
at the zero of a semigroup topology $\tau$ on $G$.

For each positive integer $m\ge 2$ consider a function
$\delta_m\in G$ such that $\delta_m(x)$ equals $1$, if $x=m$, and equals $0$, otherwise.
We claim that $S=\{0\}\cup\{-\delta_{m}: m\ge 2\}$ is a closed suitable set.
Indeed, since $G$ is a direct sum of cyclic groups $C_m$ for $m\ge 2$ and $\delta_m$ is a
generator of $C_m$, we have $\langle S\rangle=G$. Let
$f: \omega\setminus\{0\}\rightarrow\omega$ be a function such that $f(1)=0$ and
$f(i)=i-2$ for any $i\geq 2$. It is easy to see that a set $(x+O_{f})\cap S$ is finite
for any $x\in G$. Since $G$ is a $T_1$ space, it follows that $S$ is a closed discrete subset of $G$.
\end{proof}

The following result complements Proposition~\ref{prop:CCnotCS}.

\begin{example}\label{e2}
There exists a feebly compact non-countably compact Baire Hausdorff paratopological group
$G\in \mathcal{S}_{cg}$ such that all its countable subsets are closed.
\end{example}
\begin{proof}
Let $G$ be the feebly compact non-countably compact Baire Hausdorff Abelian paratopological group
with all countable subsets closed constructed by Sanchis and Tkachenko in the proof of~\cite[Theorem
2]{ST2012}. They also constructed an open subsemigroup $C$ of the group $G$
such that $C^{-1}$ is a closed discrete subspace of $G$ and $G=CC^{-1}$, so $G\in
\mathcal{S}_{cg}$.
\end{proof}

The following result extends Proposition 2.7 from~\cite{DTT1999}.

\begin{proposition}\label{p0new} Let $G$ be a countably compact paratopological group
with a suitable set, $H$ be a Hausdorff paratopological group, and $f:G\to H$ be a
continuous homomorphism with dense image. Then $H$ has a suitable set.
\end{proposition}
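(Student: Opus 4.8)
The plan is to show that $S' := f(S)\setminus\{e_H\}$ is a suitable set for $H$, where $S$ is a suitable set for $G$. Write $K = S\cup\{e_G\}$. Since $S$ is discrete and $G$ is $T_1$, every point of $S$ is isolated in $K$, so $e_G$ is the only point of $K$ that can be an accumulation point; as $K$ is closed in the countably compact space $G$, it is itself countably compact, and hence every infinite subset of $S$ has $e_G$ as its only accumulation point in $K$. The density requirement is immediate: since $f$ is a continuous homomorphism, $\langle f(S)\rangle = f(\langle S\rangle)$ and $f(\overline{\langle S\rangle})\subseteq\overline{\langle f(S)\rangle}$, so $f(G)\subseteq\overline{\langle f(S)\rangle}$; as $f(G)$ is dense in $H$, the subgroup $\langle S'\rangle=\langle f(S)\rangle$ is dense in $H$.

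The crux is to prove that $e_H$ is the only possible accumulation point of $f(S)$ in $H$. I would argue by contradiction: suppose $y\in H$, $y\neq e_H$, is an accumulation point of $f(S)$. Using that $H$ is Hausdorff, choose disjoint open sets $U\ni y$ and $U'\ni e_H$, and by continuity of $f$ pick an open $W\ni e_G$ in $G$ with $f(W)\subseteq U'$. Since $y$ accumulates $f(S)$, the set $U\cap f(S)$ is infinite, so $A := f^{-1}(U)\cap S$ is an infinite subset of $S$. By the first paragraph, $e_G$ is an accumulation point of $A$ in $K$, whence every neighbourhood of $e_G$ meets $A$; in particular $W\cap A\neq\emptyset$. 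But $f(A)\subseteq U$ and $f(W)\subseteq U'$ force $f(W\cap A)\subseteq U\cap U'=\emptyset$, that is $W\cap A=\emptyset$, a contradiction. Hence no such $y$ exists.

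With this claim in hand the remainder is routine. Every point of $S'$ fails to be an accumulation point of $f(S)$, so, $H$ being $T_1$, it is isolated in $f(S)$ and $S'$ is discrete; and since the only possible accumulation point of $f(S)\cup\{e_H\}$ is $e_H$, which already lies in $S'\cup\{e_H\}$, the set $S'\cup\{e_H\}$ is closed, so $S'$ is a suitable set for $H$. The main obstacle is the accumulation-point claim, and in particular handling it without assuming $H$ is first countable: one cannot in general pass to a convergent sequence, so I would work throughout with the accumulation-point formulation of countable compactness rather than with sequences, exactly as above. It is worth noting that the argument nowhere uses continuity of inversion, only continuity of $f$, the Hausdorffness of $H$, and the $T_1$ separation and countable compactness of $G$; this is precisely what lets the topological-group result of Proposition 2.7 in~\cite{DTT1999} carry over to the paratopological setting.
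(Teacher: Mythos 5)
Your proposal is correct and follows essentially the same route as the paper's proof: take $S'=f(S)\setminus\{e\}$, use countable compactness of the closed set $S\cup\{e\}$ together with discreteness of $S$ to force $e$ to be the only accumulation point, and push this through $f$ using the Hausdorffness of $H$. The only cosmetic difference is that the paper packages the accumulation-point argument as ``$f(S)\setminus V$ is finite for every neighbourhood $V$ of $e$,'' concluding that $f(S)\cup\{e\}$ is compact and hence closed in $H$, whereas you derive discreteness and closedness of $S'\cup\{e\}$ directly from the accumulation-point characterization.
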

\begin{proof} A group $\langle f(S)\rangle=f(\langle S\rangle)$ is dense in $H$.
We claim that $f(S)\cup\{e\}$ is a compact space with all points of $S'=f(S)\setminus\{e\}$
isolated. Indeed, let $V$ be any open neighborhood of $e$ in $H$.
Suppose for a contradiction that a set $f(S)\setminus V$ is infinite.
Then a set $S\setminus f^{-1}(V)$ is infinite too. Since the space $G$ is countably compact,
the set $S\setminus f^{-1}(V)$ has an accumulation point $x\in G\setminus f^{-1}(V)$,
which is impossible since $x\ne e$. If $f(S)=\{e\}$ then $H=\overline{\langle f(S)\rangle}=\{e\}$.
Otherwise $S'$ is a suitable set for $H$, because
$\langle f(S)\rangle=\langle S'\rangle$, $S'\cup\{e\}=f(S)\cup\{e\}$ is a compact subset of a
Hausdorff space $H$ and so closed in it, and all points of $S'$ are isolated in $S'$.
\end{proof}

\section{Saturated paratopological groups}\label{sec:saturated}

Following Guran we call a paratopological group {\it saturated} if for each
neighborhood $U$ of the identity of $G$ its inverse $U^{-1}$ has
non-empty interior in $G$ ~\cite{BR2018}.

By {\it the group reflexion} $G^\flat=(G,\tau^\flat)$ of a
paratopological group $(G,\tau)$ we understand the group $G$
endowed with the strongest topology $\tau^\flat\subset\tau$
turning $G$ into a topological group. This topology admits a
categorial description: $\tau^\flat$ is a unique topology on $G$
such that\begin{itemize}
\item $(G,\tau^\flat)$ is a topological group; \item the identity
homomorphism $\operatorname{id}:(G,\tau)\to(G,\tau^\flat)$ is continuous; \item
for each continuous group homomorphism $h:G\to H$ into a
topological group $H$, the homomorphism $h\circ \operatorname{id}^{-1}:G^\flat\to
H$ is continuous.
\end{itemize}
Observe that the group reflexion of the Sorgenfrey line $\mathbb S$ is
the usual real line $\mathbb R$.~\cite{BR2018}.

By~\cite[Proposition 3]{BR2001}, for a saturated paratopological group
$(G,\tau)$, a base at the identity of the topology $\tau^\flat$
consists of the sets $UU^{-1}$, where $U$ runs over open neighborhoods of $e$ in $G$.
It follows (see also~\cite[Corollary 2]{BR2001}) that if $G$ is Hausdorff then
its group reflexion $G^\flat$ is Hausdorff as well.
Theorem 5 from ~\cite{BR2001} implies the following

\begin{lemma}\label{l:Th5BR2001} Let $G$ be a saturated paratopological group. Then
each non-empty open subset of $G$ contains a non-empty open subset of a (non-necessary $T_1$)
group $G^\flat$.
\end{lemma}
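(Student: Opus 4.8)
The plan is to produce, inside an arbitrary nonempty $\tau$-open set $V$, a nonempty $\tau^\flat$-open set by exhibiting a single point $x\in V$ together with a $\tau$-open neighbourhood $U$ of $e$ for which $UU^{-1}x\subseteq V$. Once this is achieved the conclusion is immediate: by~\cite[Proposition 3]{BR2001} the sets $UU^{-1}$ (where $U$ ranges over $\tau$-open neighbourhoods of $e$) form a base at $e$ of $\tau^\flat$, so $UU^{-1}x$ is a $\tau^\flat$-neighbourhood of $x$ (right translation being a homeomorphism of the topological group $G^\flat$), and hence contains a nonempty $\tau^\flat$-open subset of $V$. Thus the whole problem reduces to the task of squeezing some right translate of a set of the form $UU^{-1}$ into $V$.

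First I would fix a point $v\in V$ and, using only continuity of the multiplication and of the right translation $y\mapsto yv$ in $(G,\tau)$, choose a $\tau$-open neighbourhood $U_1$ of $e$ with $U_1U_1v\subseteq V$. The remaining target is then to find $U$ and $x$ with $UU^{-1}x\subseteq U_1U_1v$.

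The hard part is that inversion need not be continuous, so $U_1^{-1}$ (and a fortiori $UU^{-1}$) need not be $\tau$-small, and in general $e$ need not even lie in the $\tau$-interior of $U_1^{-1}$. This is exactly where saturation is indispensable. I would apply it to $U_1$ to see that $O:=\operatorname{int}_\tau(U_1^{-1})$ is nonempty, pick $c\in O$, and use continuity of $y\mapsto cy$ to find a $\tau$-open neighbourhood $U\subseteq U_1$ of $e$ with $cU\subseteq U_1^{-1}$. Passing to inverses gives $U^{-1}c^{-1}\subseteq U_1$, that is $U^{-1}\subseteq U_1c$, whence $UU^{-1}\subseteq U_1U_1c$ and therefore $UU^{-1}c^{-1}\subseteq U_1U_1$. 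Setting $x:=c^{-1}v$ then yields $UU^{-1}x=UU^{-1}c^{-1}v\subseteq U_1U_1v\subseteq V$, which is precisely what the reduction required; note that $x=c^{-1}v\in UU^{-1}x\subseteq V$ holds automatically since $e\in UU^{-1}$.

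I expect the saturation step — choosing the point $c$ where $U_1^{-1}$ has nonempty interior and thereby bounding $U^{-1}$ by the right translate $U_1c$ — to be the only substantial point; everything else is translation bookkeeping that uses nothing beyond joint continuity of the operation. This is essentially the content of Theorem 5 of~\cite{BR2001}, which could alternatively be invoked directly in place of the explicit construction sketched above.
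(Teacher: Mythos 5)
Your proof is correct. Note first that the paper itself offers no argument for this lemma: it simply states that the lemma is implied by Theorem~5 of~\cite{BR2001}, so there is nothing in the text to compare your reasoning against except that citation. What you have done is reconstruct a complete, self-contained derivation from ingredients the paper does quote, namely Proposition~3 of~\cite{BR2001} (the sets $UU^{-1}$ form a base at $e$ for $\tau^\flat$) together with the definition of saturation. The bookkeeping checks out: $U_1U_1v\subseteq V$ comes from joint continuity; saturation gives $c$ in the $\tau$-interior of $U_1^{-1}$; continuity of left translation gives $U\subseteq U_1$ with $cU\subseteq U_1^{-1}$, hence $U^{-1}\subseteq U_1c$ and $UU^{-1}c^{-1}v\subseteq U_1U_1v\subseteq V$; and since $UU^{-1}$ is a $\tau^\flat$-neighbourhood of $e$ and right translation by $x=c^{-1}v$ is a $\tau^\flat$-homeomorphism, the set $UU^{-1}x$ contains a nonempty $\tau^\flat$-open subset of $V$ containing $x$. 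You correctly identify saturation as the one essential ingredient (without it $e$ need not lie in the $\tau$-interior of $U_1^{-1}$ and the whole translation argument collapses), and your closing remark that one could instead invoke Theorem~5 of~\cite{BR2001} directly is exactly what the authors do. Your version has the advantage of making the paper self-contained at this point.
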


\begin{proposition}\label{t4}
Let $G$ be a saturated Hausdorff paratopological group and $S$ be a (closed) suitable
set for a group $G^{\flat}$. Then $S$ is a (closed) suitable set for a group $G$.
\end{proposition}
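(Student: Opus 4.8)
The plan is to check the three defining properties of a suitable set directly for the topology $\tau$ of $G$, using throughout that $\tau^\flat\subset\tau$, so that $\tau$ refines $\tau^\flat$. I note first that the subgroup $\langle S\rangle$ is an algebraic object, and hence is literally the same subset of $G$ whether we regard $G$ as carrying $\tau$ or $\tau^\flat$; only its topological behaviour can change.

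Two of the three properties transfer at once from $G^\flat$ to $G$ because $\tau$ has more open sets than $\tau^\flat$. If $S$ is discrete in $G^\flat$, then for each $s\in S$ there is a $\tau^\flat$-open set $U$ with $U\cap S=\{s\}$; since $U\in\tau$ as well, the point $s$ is isolated in $S$ for $\tau$, so $S$ is discrete in $G$. Likewise, if $S\cup\{e\}$ is $\tau^\flat$-closed, then its complement is $\tau^\flat$-open, hence $\tau$-open, so $S\cup\{e\}$ is $\tau$-closed; the very same argument applied to $S$ in place of $S\cup\{e\}$ settles the parenthetical ``closed'' variant simultaneously.

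The only genuine difficulty is to upgrade the density of $\langle S\rangle$ from $\tau^\flat$ to $\tau$. This is the step I expect to be the main obstacle, since density is not in general inherited by a finer topology: a finer topology has smaller nonempty open sets, which a fixed dense set may miss. Here, however, saturation saves the situation through Lemma~\ref{l:Th5BR2001}. I would take an arbitrary nonempty $\tau$-open subset $W$ of $G$. By Lemma~\ref{l:Th5BR2001}, $W$ contains a nonempty open subset $V$ of $G^\flat$, that is $V\in\tau^\flat$ and $V\subset W$. Since $\langle S\rangle$ is $\tau^\flat$-dense, $V\cap\langle S\rangle\neq\emptyset$, whence $W\cap\langle S\rangle\neq\emptyset$. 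As $W$ was arbitrary, $\langle S\rangle$ is $\tau$-dense in $G$. Combining the three verifications shows that $S$ is a (closed) suitable set for $G$, which completes the argument.
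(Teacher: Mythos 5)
Your proposal is correct and follows essentially the same route as the paper: the discreteness of $S$ and the closedness of $S\cup\{e\}$ (and of $S$ itself in the ``closed'' variant) pass from $G^\flat$ to $G$ because $\tau^\flat\subset\tau$, i.e.\ because the identity map $G\to G^\flat$ is continuous, and the density of $\langle S\rangle$ is upgraded from $\tau^\flat$ to $\tau$ exactly via Lemma~\ref{l:Th5BR2001}. You have merely written out in more detail the two steps the paper compresses into two sentences.
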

\begin{proof} Since the identity map from $G$ to $G^\flat$ is continuous,
$S$ is a (closed) discrete subspace of $G$ and $S\cup \{e\}$ is closed in $G$.
By Lemma~\ref{l:Th5BR2001}, a dense in $G^\flat$ set $\langle S\rangle$ is also dense in $G$.
\end{proof}

\subsection{Precompact paratopological groups}
Precompact paratopological groups are both compact-like and saturated
(see \cite[Proposition 3.1]{Rav2} for the proof of the latter).
Recall that a paratopological group $G$ is {\it precompact}
if for each neighborhood $U$ of the identity of $G$,
there exists a finite subset $F$ of $G$ such that $FU=G$.
By Proposition 2.1 from~\cite{Rav2} a paratopological group $G$ is precompact iff
for each neighborhood $U$ of the identity of $G$,
there exists a finite subset $F$ of $G$ such that $UF=G$.

Precompact topological groups are exactly subgroups of compact topological groups,
see, for instance,~\cite[Corollary 3.7.17]{AT},
which implies that a subgroup of a precompact topological group is precompact.
But a subgroup of a precompact paratopological group can fail to be precompact.
Moreover, by Corollary 5 from~\cite{BR2003}, subgroups of precompact Hausdorff
paratopological groups are exactly so-called Bohr-separated Hausdorff paratopological groups.
The latter class includes, for instance, all locally compact Hausdorff
topological groups and all locally convex Hausdorff linear topological spaces
(or more generally all locally quasi-convex Abelian Hausdorff topological groups,
see \cite{Aus} or \cite{Ba}).

On the other hand, a dense subgroup of a precompact paratopological group is precompact, see
\cite[Lemma 1]{T2013}. Also the following simple proposition holds

\begin{proposition} An open subgroup $H$ of a precompact paratopological group $G$
is precompact.
\end{proposition}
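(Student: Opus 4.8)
The plan is to verify the precompactness of $H$ directly from the definition, exploiting that $H$ is open. First I would fix an arbitrary neighborhood $V$ of the identity $e$ in $H$ and record two facts about it. On one hand, since $V$ is a neighborhood in the subspace $H$, we automatically have $V\subseteq H$. On the other hand, since $H$ is open in $G$, every neighborhood of $e$ in $H$ is also a neighborhood of $e$ in $G$: if $O$ is open in $H$ with $e\in O\subseteq V$, then $O=H\cap O'$ for some $O'$ open in $G$, and $H\cap O'$ is open in $G$ because $H$ is open. These two observations are the crux of the argument and allow me to transfer the precompactness of $G$ down to $H$.

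Next I would apply the precompactness of $G$ to $V$, now viewed as a neighborhood of $e$ in $G$: there is a finite set $F\subseteq G$ with $G=FV$. Put $F_0=F\cap H$, a finite subset of $H$. I claim that $H=F_0V$. The inclusion $F_0V\subseteq H$ is immediate, since $F_0\subseteq H$ and $V\subseteq H$ while $H$ is a subgroup. For the reverse inclusion, take $h\in H$; as $h\in G=FV$, we may write $h=fv$ with $f\in F$ and $v\in V$. Then $f=hv^{-1}$, and since $h\in H$ and $v\in V\subseteq H$ while $H$ is a subgroup, we conclude $f\in H$, whence $f\in F_0$ and $h\in F_0V$.

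Since $V$ was an arbitrary neighborhood of $e$ in $H$ and $F_0$ is a finite subset of $H$ with $F_0V=H$, this shows that $H$ is precompact. I do not expect a genuine obstacle here; the one point to handle with care is that the familiar ``inverse trick'' (passing from $V$ to $V^{-1}V$) is unavailable in a paratopological group, because inversion need not be continuous and $V^{-1}$ need not be a neighborhood. It is therefore essential that the factorization $h=fv$ forces $f=hv^{-1}$ to lie already in $H$ rather than in some larger set, and this is exactly where the closure of the subgroup $H$ under the group operations is used, so that the argument never refers to openness of the inverse.
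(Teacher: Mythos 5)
Your proof is correct and is essentially the paper's own argument: both pass from a neighborhood of $e$ in $H$ to one in $G$ using openness of $H$, take a finite $F$ with $FV=G$, and observe that only the translates by $F\cap H$ can meet $H$ (the paper phrases this as $(F\setminus H)V\cap H=\varnothing$, you phrase it as $f=hv^{-1}\in H$, which is the same computation). Nothing further is needed.
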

\begin{proof}
Pick any open neighborhood $U$ of $e$ in $H$. Since $H$ is open in $G$, the set $U$ is open in $G$,
hence there exists a finite subset $F$ of $G$ such that $FU=G$. Since
$(F\setminus H)U\cap H\subset (F\setminus H)H\cap H=\varnothing$, $H\subset (F\cap H)U$.
\end{proof}

\begin{proposition}\label{l15+}
Let $G$ be a non-feebly compact precompact paratopological group with a
countable dense subgroup $P$. Then there exists a subset $L$ of $P$ such that $L$
is discrete and closed in $G$ and $\langle L\rangle=P$. In particular,
$L$ is suitable for $G$, and thus $G\in\mathcal{S}_{c}$.
\end{proposition}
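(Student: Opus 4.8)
The plan is to reduce the statement to the construction of a closed discrete subset $L\subseteq P$ with $\langle L\rangle=P$, and to obtain such an $L$ from the non-feeble compactness of $G$ by a matched-pair encoding of the countable group $P$. Once $L$ is produced, suitability is immediate: $L$ is discrete, $L\cup\{e\}$ is closed (as $L$ is closed), and $\langle L\rangle=P$ is dense in $G$ since $P$ is dense; hence $L$ is a closed suitable set and $G\in\mathcal{S}_{c}$. I would first dispose of the discrete case: if $G$ has an isolated point then, by homogeneity of the left translations, $G$ is discrete, $P=G$ is a countable discrete group, and $L=P\setminus\{e\}$ works. So assume $G$ is non-discrete; being $T_1$ it then has no isolated points, so every nonempty open set is infinite and, by density, meets $P$ in an infinite set.

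The engine of the construction is the following stability mechanism. Since $G$ is not feebly compact, it carries a countably infinite locally finite family $\mathcal V=\{V_n:n\in\omega\}$ of nonempty open sets; if from each $V_n$ one selects a finite subset $S_n$, then $\bigcup_n S_n$ is closed and discrete, because any point of $G$ has a neighborhood meeting only finitely many $V_n$ and therefore containing only finitely many selected points. I would upgrade $\mathcal V$ using precompactness so that its union is all of $G$: fix $w_0\in V_0\cap P$, put $U=w_0^{-1}V_0$ (a neighborhood of $e$), choose by precompactness a finite $F\subseteq G$ with $FU=G$, and replace $\mathcal V$ by $\{fw_0^{-1}V_n:f\in F,\ n\in\omega\}$. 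This is a finite union of locally finite families, hence locally finite, and its union contains $FU=G$.

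Generation is then arranged by a matched-pair trick. For $p\in P$, writing a pair $a,\,ap$ into $L$ contributes $p=a^{-1}(ap)$ to $\langle L\rangle$, and $a,ap\in P$ whenever $a\in P$. Enumerating $P=\{p_k:k\in\omega\}$, for each $k$ I would choose $a_k\in P$ so that both $a_k$ and $a_kp_k$ fall into members of a single locally finite family, with only finitely many chosen points in each member; then $L=\{a_k:k\in\omega\}\cup\{a_kp_k:k\in\omega\}\subseteq P$ is closed and discrete and $\langle L\rangle\supseteq\{p_k:k\in\omega\}=P$, whence $\langle L\rangle=P$. To place the partner $a_kp_k$ inside the family I would use that $\bigcup_n V_n=G$ forces the nonempty open set $V_ip_k$ to meet some member $V_j$, so that $a_k$ can be taken in the nonempty open set $V_i\cap V_jp_k^{-1}$ intersected with the dense subgroup $P$.

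The main obstacle is precisely the simultaneous closed-discreteness of the whole witness set $\{a_kp_k:k\in\omega\}$ when all countably many targets are encoded at once. For a single fixed $p$ there is no difficulty, since $\mathcal V\cup\{V_np:n\in\omega\}$ is again a finite union of locally finite families; but the partners $a_kp_k$ are translated by different elements $p_k$, and their host sets $V_jp_k$ need not lie in distinct members of one locally finite family. In fact precompactness, through the covering $FU=G$, tends to push these partners into only finitely many members, threatening an accumulation point. I would therefore replace the one-shot selection by a careful induction: process the $p_k$ one at a time, at step $k$ reserving a fresh index for the anchor member $V_{i_k}$ and choosing $a_k$ deep enough—in a high-index, small member, exploiting the local finiteness of $\mathcal V$ and the freedom in the choice of $a_k\in V_{i_k}\cap V_{j_k}p_k^{-1}\cap P$—that the new partner $a_kp_k$ falls outside the finitely many isolating neighborhoods already fixed for the earlier chosen points. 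Maintaining, at every stage, a locally finite family of open sets that isolates the chosen points, and verifying that this bookkeeping yields a genuinely locally finite isolating family in the limit, so that $L$ is closed and discrete, is the crux of the argument.
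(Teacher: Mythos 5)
Your reduction to producing a closed discrete $L\subseteq P$ with $\langle L\rangle=P$ is correct, and the matched-pair device $\{a_k,a_kp_k\}$ together with the precompactness upgrade making $\bigcup_n V_n=G$ is a reasonable setup. But the argument has a genuine gap at exactly the point you yourself flag as ``the crux'': you never supply a mechanism that makes the whole set $\{a_kp_k:k\in\omega\}$ closed and discrete. Choosing each new partner outside the finitely many isolating neighborhoods already reserved for the earlier points does not prevent accumulation: a sequence can avoid every previously reserved neighborhood and still converge to a fresh point (the set $\{1/n:n\in\mathbb N\}$ in $\mathbb R$ behaves exactly this way). Closed discreteness is a condition at every point of $G$, not only at the points already chosen, and your stage-$k$ bookkeeping gives no control at points lying outside the finitely many neighborhoods fixed so far. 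Worse, as you correctly observe, precompactness actively works against you by forcing the partners $a_kp_k$ into finitely many members of any covering family; so some additional structural input is unavoidable, and the proposal does not identify it.

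The paper supplies that input through saturation and the group reflexion, which your outline never uses. A precompact paratopological group is saturated, so by Lemma~\ref{l:Th5BR2001} each $V_n$ contains a non-empty set $W_n$ open in the topological group $G^\flat$. The tails $U_n=\bigcup_{k\ge n}W_k$ then have two properties at once: (i) every point of $G$ has a neighborhood meeting only finitely many $V_n$, hence disjoint from $U_n$ for all large $n$ --- so any set whose $k$-th batch of new points lies in $U_k$ is automatically closed and discrete; and (ii) each $U_n$ is a non-empty open subset of the precompact topological group $G^\flat$, so finitely many translates of it cover $G$, which is what keeps the inductive invariant $G=\langle L_k\rangle U_k$ alive and guarantees that a generator for $x_{k+1}$ can always be found inside $U_k\cap P$ (this is the induction of Lemma 3.5 of~\cite{DTT1999}). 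It is this interplay --- tails that are simultaneously locally vanishing in $G$ and large in the group-reflexion sense --- that resolves the tension between existence of the choices and closed discreteness of their union. Without this idea, or an equivalent substitute, your construction does not close.
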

\begin{proof}
Since the space $G$ is not feebly compact, there is an infinite locally finite sequence
$\{V_n\}$ of non-empty open subsets of $G$.
By~\cite[Proposition 3.1]{Rav2}, the group $G$ is saturated.
By Lemma~\ref{l:Th5BR2001}, for each $n$ there
exists a non-empty open in $G^\flat$ set $W_n\subset V_n$.
Put $U_n=\bigcup_{k\ge n} W_k$. Then $\{U_n\}$ is a non-decreasing
family of non-empty open in $G^\flat$ sets which is locally finite in $G$.
Let $\{x_{n}: n\in\omega\}$ be an enumeration of elements of $P$.
As in the proof of~\cite[Lemma 3.5]{DTT1999} we can construct
by induction an increasing sequence $\{L_{k}: k\in\omega\}$
of finite subsets of $P$ such that for each $k\in\omega$ we have
$x_{k}\in \langle L_{k}\rangle$, $L_{k+1}\setminus L_{k}\subset U_{k}$, and
$G=\langle L_{k}\rangle U_{k}$. Since the family $\{U_n\}$ is
locally finite in $G$, it follows that $L=\bigcup_{n\in\omega} L_n$
is a closed discrete subset of $G$ and $\langle L\rangle=P$.
\end{proof}

\subsection{Non-precompact paratopological groups}

For a Hausdorff paratopological group $G$ the {\it Hausdorff number} of $G$,
denoted by $Hs(G)$, is the minimum cardinal number $\kappa$
such that for every neighborhood $U$ of $e$
there exists a family $\gamma$ of neighborhoods of $e$ such that $|\gamma|\le\kappa$ and
$\bigcap_{V\in\gamma} VV^{-1}\subset U$, see~\cite{Tka2009}.

\begin{proposition}\label{c5} Let $G$ be a Hausdorff locally separable saturated (non-precompact)
pa\-ra\-to\-po\-lo\-gi\-cal group such that $Hs(G)\psi(G)\le\omega$. Then $G$ has a (closed)
suitable set.
\end{proposition}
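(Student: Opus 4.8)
The plan is to reduce Proposition~\ref{c5} to the already-established results about the group reflexion. By Proposition~\ref{t4}, it suffices to produce a (closed) suitable set for the group reflexion $G^{\flat}$, since $G$ is saturated and Hausdorff (and so $G^{\flat}$ is Hausdorff by the remark following~\cite[Proposition 3]{BR2001}). The hypothesis $Hs(G)\psi(G)\le\omega$ is exactly what one needs to control the topology of $G^{\flat}$: by~\cite[Proposition 3]{BR2001}, a base at the identity of $\tau^{\flat}$ consists of sets $UU^{-1}$ with $U$ an open neighborhood of $e$ in $G$, and the definition of the Hausdorff number says that countably many such sets intersect down inside any prescribed neighborhood. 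Combining this with $\psi(G)\le\omega$, I would argue that $G^{\flat}$ is a first countable (equivalently, metrizable, being a topological group) Hausdorff topological group. The local separability of $G$ should likewise descend to $G^{\flat}$, since the group reflexion topology is coarser.

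First I would verify that $G^{\flat}$ is a locally separable first countable (hence metrizable) Hausdorff topological group and that $G^{\flat}$ is non-precompact. The non-precompactness is the delicate point: $G$ is assumed non-precompact, and I would want to transfer this to $G^{\flat}$. Here I expect to use that a topological group fails to be precompact precisely when some neighborhood of $e$ cannot be covered by finitely many translates; one must check that non-precompactness of the finer group $G$ forces non-precompactness of the coarser $G^{\flat}$, which is not automatic for coarsenings in general but should hold here because for saturated groups the reflexion base $\{UU^{-1}\}$ is tightly linked to the original neighborhoods. Alternatively, a non-precompact first countable metrizable group contains a closed discrete countable set generating a dense (indeed the whole, if one is careful) subgroup, which is the classical route.

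Next I would invoke the metrizable topological-group counterpart — this is essentially Corollary~\ref{c5+} as announced in the Introduction, namely that a Hausdorff locally separable first countable non-precompact topological group has a (closed) suitable set. For a metrizable non-precompact topological group one constructs, inside a separable open subgroup or along a locally finite sequence of neighborhoods obtained from non-precompactness, a countable closed discrete set whose generated subgroup is dense; the local separability guarantees the density statement can be localized and the first countability guarantees discreteness and closedness of the constructed set. This yields a (closed) suitable set $S$ for $G^{\flat}$.

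Finally, applying Proposition~\ref{t4} transports $S$ back to $G$: the same set $S$ is a (closed) suitable set for $G$, because the identity map $G\to G^{\flat}$ is continuous (so discreteness and closedness are preserved) and by Lemma~\ref{l:Th5BR2001} a dense subgroup of $G^{\flat}$ remains dense in $G$. The main obstacle I anticipate is the non-precompactness transfer in the second step: confirming that passing to the coarser reflexion topology does not accidentally turn a non-precompact group into a precompact one. I would handle this by exploiting the saturated structure, using that $UU^{-1}$-sets form the reflexion base and that a locally finite family of open sets in $G$ (witnessing non-feeble-compactness, if available) or a direct precompactness obstruction in $G$ produces a corresponding obstruction in $G^{\flat}$.
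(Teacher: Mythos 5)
Your overall strategy is the paper's: pass to the group reflexion $G^{\flat}$, check that the relevant properties (Hausdorffness, local separability, non-precompactness, a countability restriction on the topology) survive the passage, invoke the topological-group theorem, and pull the suitable set back via Proposition~\ref{t4}. The non-precompactness transfer that you flag as delicate is handled exactly as you suggest, via Lemma~\ref{l:Th5BR2001} (every nonempty open subset of $G$ contains a nonempty open subset of $G^{\flat}$, so a finite cover by translates in $G^{\flat}$ yields one in $G$); the same lemma, combined with homogeneity, is also what gives local separability of $G^{\flat}$ --- your remark that this follows merely because $\tau^{\flat}$ is coarser is too quick, since coarsening a topology does not in general preserve local separability.

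The genuine gap is your claim that $G^{\flat}$ is first countable, hence metrizable. From $Hs(G)\psi(G)\le\omega$ and the fact that the sets $UU^{-1}$ form a base of $\tau^{\flat}$ at $e$, one obtains only a \emph{countable family of $\tau^{\flat}$-neighborhoods of $e$ whose intersection is $\{e\}$}, i.e.\ $\psi(G^{\flat})\le\omega$; a countable pseudobase at $e$ is not a local base, and a Hausdorff topological group of countable pseudocharacter need not be first countable (e.g.\ $\mathbb Z$ with the Bohr topology). Consequently your construction step, which leans on metrizability (``first countability guarantees discreteness and closedness of the constructed set''), does not apply to $G^{\flat}$. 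What is actually needed, and what the paper cites, is Theorem 5.14 (respectively Corollary 5.9) of~\cite{CMRS1998}: a Hausdorff locally separable (non-precompact) topological group of \emph{countable pseudocharacter} has a (closed) suitable set. Note also that appealing to Corollary~\ref{c5+} here would be circular, since that corollary is deduced from the very proposition you are proving; the argument must rest on the topological-group result from~\cite{CMRS1998} instead.
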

\begin{proof}
Since a base at the identity of the topology $\tau^\flat$
consists of the sets $UU^{-1}$, where $U$ runs over open neighborhoods of $e$ in $G$,
$\psi(G^\flat)\le Hs(G)\psi(G)\le\omega$.
Since $G$ is saturated and locally separable, by Lemma~\ref{l:Th5BR2001}
the group $G^{\flat}$ is locally separable and if the group $G$ is non-precompact then
the group $G^{\flat}$ is non-precompact as well.
By Theorem 5.14 (Corollary 5.9) from~\cite{CMRS1998}, the group $G^{\flat}$ has a (closed) suitable set.
By Proposition~\ref{t4}, $G$ has a (closed) suitable set.
\end{proof}

\begin{corollary}\label{c5+} Each Hausdorff locally separable saturated first countable
(non-pre\-com\-pact) paratopological group has a (closed) suitable set.
\end{corollary}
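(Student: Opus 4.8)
The plan is to derive this corollary directly from Proposition~\ref{c5}. The hypotheses of being Hausdorff, locally separable, saturated, and non-precompact are assumed in both statements, so they transfer for free; the only thing I would need to verify is that first countability forces the cardinal invariant $Hs(G)\psi(G)$ to be countable. Once this is checked, Proposition~\ref{c5} applies verbatim and yields a (closed) suitable set.

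First I would bound the pseudocharacter. Since $G$ is first countable it has a countable base $\{W_n:n\in\omega\}$ at the identity $e$, and since all groups here are $T_1$ the intersection of all neighborhoods of $e$ equals $\{e\}$; as every neighborhood of $e$ contains some $W_n$, we get $\bigcap_{n\in\omega}W_n=\{e\}$, whence $\psi(G)\le\omega$ (this is just the general inequality $\psi\le\chi$ specialized to the homogeneous space $G$). Next I would bound the Hausdorff number, which is the only genuinely group-theoretic point. Recall the standard characterization that a paratopological group $G$ is Hausdorff if and only if $\bigcap_{V}VV^{-1}=\{e\}$, where $V$ ranges over all neighborhoods of $e$. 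Using the countable base $\{W_n\}$ and the fact that each neighborhood $V$ of $e$ contains some $W_n$ (so that $W_nW_n^{-1}\subset VV^{-1}$), I obtain $\bigcap_{n\in\omega}W_nW_n^{-1}\subset\bigcap_{V}VV^{-1}=\{e\}$, and the reverse inclusion is trivial. Thus for every neighborhood $U$ of $e$ the countable family $\gamma=\{W_n:n\in\omega\}$ satisfies $\bigcap_{V\in\gamma}VV^{-1}=\{e\}\subset U$, so $Hs(G)\le\omega$. Combining the two estimates gives $Hs(G)\psi(G)\le\omega$, and an application of Proposition~\ref{c5} finishes the proof.

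The main (and essentially only) obstacle is the Hausdorff-number estimate, and it reduces to invoking the characterization $\bigcap_{V}VV^{-1}=\{e\}$ of Hausdorffness for paratopological groups; everything else is routine and in fact immediate from first countability together with the blanket $T_1$ assumption.
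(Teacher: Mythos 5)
Your proposal is correct and is exactly the intended deduction: the paper states the corollary as an immediate consequence of Proposition~\ref{c5}, and the only content is your verification that first countability gives $\psi(G)\le\omega$ and, via the characterization of Hausdorffness by $\bigcap_V VV^{-1}=\{e\}$, also $Hs(G)\le\omega$. Both estimates are argued correctly, so nothing is missing.
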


\begin{example} The Sorgenfrey line $\mathbb{S}$ is a Hausdorff first countable
hereditarily Lindel\"of (by \cite[Exercise 3.8.A.c]{Eng})
saturated paratopological group.
Any two-element subset $\{x,y\}$ of $\mathbb R$ such that $x/y$ is irrational
is a suitable set both for $\mathbb R$ and $\mathbb S$.
By Proposition~\ref{t2.2}, each dense subgroup of $\mathbb{S}$ has a countable closed suitable set.
However, a subgroup $\mathbb Q$ of $\mathbb{S}$ or $\mathbb{R}$ has no finite suitable set, because
each finitely generated subgroup of $\mathbb{Q}$ is discrete in $\mathbb R$.
\end{example}

\begin{question}
Does every Hausdorff locally separable first-countable paratopological group $G$ have a suitable set?
\end{question}

Recall that in~\cite{G2003} is announced that every Hausdorff separable non-precompact
paratopological group $G$ has a closed suitable set.

\begin{remark}
By \cite[Corollary 3.14]{DTT1999}, a free (Abelian) topological group over a metrizable space $X$
has a closed suitable set. On the other hand,
Theorem 3.8 from~\cite{CMRS1998} implies that a (Markov) free topological group
$F(\beta D)$ over a \v{C}ech-Stone compactification of an uncountable discrete space $D$
has no suitable set.
By Example 2.13 from~\cite{DTT1999}, there exists a dense open pseudocompact subspace
$Y$ of $\beta\mathbb N\setminus\mathbb N$
such that the free Abelian topological group $A(Y)$ over the space $Y$ has a closed suitable set.
Therefore, the free Abelian topological group $A(\beta\mathbb N\setminus\mathbb N)$
over $\beta\mathbb N\setminus\mathbb N$ does not have a suitable set, but it contains a dense subgroup
$A(Y)\in\mathcal S_c$.

Usually, a free (Abelian) paratopological group over a space $X$ (see, for instance, ~\cite{PN2006}
for the definitions) is much more asymmetric than a free topological group over $X$.
In the proof of~\cite[Proposition 3.5]{PN2006}
Pyrch and Ravsky showed that for any non-empty space $X$, a set $-X$ is a closed suitable
set for the (Markov) free Abelian paratopological group $A_p(X)$. A similar result holds for
the (Markov) free paratopological group over $X$, which, by~\cite[Theorem 4.13]{EN2012},
contains $X^{-1}$ as a closed subset and a discrete subspace. In particular,
if $X=\beta D$ then, by Proposition 2.2 from~\cite{PN2006},
the group reflection of this group is topologically isomorphic
to a free topological group over $X$, which has no suitable set by
Theorem 3.8 from~\cite{CMRS1998}.
\end{remark}

\begin{example}\label{ex:GflatwoSS}
There exists a zero-dimensional saturated paratopological group $G$
which is a sum of two of its closed discrete subsets such that the group reflexion
$G^{\flat}$ has no suitable set.
\end{example}
\begin{proof}
According to~\cite[Theorem 2.4.a]{DTT1999} there exists a non-separable Lindel\"of
locally convex linear topological space $L$ of countable pseudocharacter.
Put $G=L\times\mathbb R$.
Let $\mathscr{B}$ be a local base at the identity of the group $L$
consisting of closed convex sets $U$ such that $U=-U$.
For each natural $n$ and each $U\in\mathscr{B}$ put
${\check U}_n=\{(x,t)\in L\times\mathbb R:0\le t< 1/n, x\in tU\}$.
It is easy to check that the family
$\check{\mathscr{B}}=\{{\check U}_n:U\in\mathscr{B}, n\in\mathbb N\}$
satisfies Pontrjagin conditions (see, for instance,~\cite[Proposition 1.1]{Rav})
for a local base at the identity of a semigroup topology on the group $G$.
Denote this topology by $\tau$.

We claim that  $\check{\mathscr{B}}$ consists of closed subsets of the
group $(G,\tau)$, so the latter is zero-dimensional. Indeed, let
$U$ be any closed convex neighborhood of the identity of $L$, $n$ be any natural number,
and $(y,s)$ be any point of $G\setminus {\check U}_n$.
If $s\ge\tfrac 1n$ then $((y,s)+{\check U}_n)\cap {\check U}_n=\varnothing$
If $s<0$ then $((y,s)+{\check U}_m)\cap {\check U}_n=\varnothing$ for each $m$ with $s+\tfrac 1m<0$.
Assume that $0\le s<\tfrac 1n$. Since $y\not\in {\check U}_n$, $y\not\in sU$. Since
the set $sU$ is closed in $L$, there exists $m\in\mathbb N$
such that $y\not\in\left(s+\tfrac 1m\right)U$.
We claim that $((y,s)+{\check U}_{2m})\cap {\check U}_n=\varnothing$.
Indeed, suppose for a contradiction that there exist $u,u'\in U$ and $0\le t<\tfrac 1n$,
$0\le t'<\tfrac 1{2m}$, such that $(y,s)+(t'u',t')=(tu,t)$. Clearly, $t+t'\ne 0$.
Since $U=-U$, we have
$$y=tu-t'u'=(t+t')\frac{tu+t'(-u)}{t+t'}\in (s+2t')U\subset \left(s+\frac 1m\right)U,$$
a contradiction.

It is easy to see that the group $(G,\tau)$ is saturated and
its group reflexion $G^\flat$ is the Cartesian product $L\times\mathbb R$.
So $G^\flat$ is a non-separable space of countable pseudocharacter.
Since the space $\mathbb R$ is $\sigma$-compact, the space $G^\flat$ is Lindel\"of,
see~\cite[Corollary 3.8.10]{Eng}. By Proposition~\ref{prop:DPTGSS}, $G^\flat$ has no suitable set.

Pick any non-zero vector $x_0\in L$. The line $\ell=\mathbb Rx_0$ is closed in $G^\flat$
and so in $(G,\tau)$. It is easy to check that
for any $x\in\ell$ and any $x_0\not\in U\in\mathscr{B}$
we have $(x+{\check U}_1)\cap\ell=\{x\}$, so $\ell$ is a discrete subset of $(G,\tau)$.

A group $L'=L\times \{0\}$ is closed in $G^\flat$ and so in $(G,\tau)$.
Clearly, for any $x\in L'$ and any $U\in\mathscr{B}$
we have $(x+{\check U}_1)\cap L'=\{x\}$, so $L'$ is a discrete subset of $(G,\tau)$.

Finally, we have $G=\ell+L'$. In particular, $\ell\cup L'$ is a closed discrete generating
subset of $G$.
\end{proof}

A subset $Y$ of a space $X$ is {\it strictly $\sigma$-discrete} (in $X$)
if $Y$ is a countable union of closed discrete subsets of $X$~\cite{DTT1999}.

We can adapt Theorem 3.6.I from ~\cite{DTT1999} to saturated paratopological groups as follows.

\begin{proposition}\label{ttt+}
Every non-feebly compact non-precompact saturated paratopological group $G$ with
a dense strictly $\sigma$-discrete subspace has a closed suitable set.
\end{proposition}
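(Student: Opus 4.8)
The plan is to build the required set $S$ directly in $G$, making it closed and discrete in $G$ and arranging $\langle S\rangle\supseteq D$ for the prescribed dense strictly $\sigma$-discrete subspace $D=\bigcup_{n\in\omega}D_n$ (so that $\langle S\rangle$ is dense), by adapting the construction of Theorem 3.6.I from~\cite{DTT1999} to the present asymmetric setting. First I would extract, exactly as in the proof of Proposition~\ref{l15+}, an infinite locally finite family $\{V_n\}$ of non-empty open subsets of $G$; this is where the hypothesis of non-feeble compactness is spent. Using that $G$ is saturated together with Lemma~\ref{l:Th5BR2001}, I would then refine each $V_n$ to a non-empty set $W_n\subseteq V_n$ open in the topological group $G^{\flat}$. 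The tails $U_n=\bigcup_{k\ge n}W_k$ form a family of $G^{\flat}$-open sets which is still locally finite \emph{in} $G$; this family is the ``room at infinity'' into which all newly created generators will be routed, and its local finiteness in $G$ is precisely what will force the final set to be closed and discrete in the finer topology of $G$.

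Second, I would transfer the remaining structure to $G^{\flat}$. As in the proof of Proposition~\ref{c5}, saturation guarantees that $G^{\flat}$ is again non-precompact, and the reflexion furnishes a base of \emph{symmetric} $G^{\flat}$-neighborhoods of $e$ of the form $UU^{-1}$, where $U$ runs over neighborhoods of $e$ in $G$. These play the role of the symmetric neighborhoods used freely in the topological-group argument of~\cite{DTT1999}: since each $UU^{-1}$ is open in $G$ as well, a set separated by such a symmetric neighborhood is closed and discrete in $G^{\flat}$, and therefore in $G$. Non-precompactness of $G^{\flat}$ then supplies, for any prescribed $UU^{-1}$, families of pairwise disjoint translates, i.e. the fresh ``slots'' needed to encode new generators.

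Third comes the encoding itself, carried out as in~\cite[Theorem 3.6]{DTT1999}. Processing the closed discrete pieces $D_n$ one level at a time, for each point $d$ to be captured I would record $d$ as a product of two generators, both pushed out into the current room $U_n$ through disjoint translates obtained from non-precompactness; placing the two halves in separated slots keeps them separated by a symmetric $G^{\flat}$-neighborhood, so that the accumulated set $S=\bigcup_n S_n$ meets each $U_n$ in a separated set while the family $\{U_n\}$ is locally finite in $G$. The bookkeeping is arranged so that $D_n\subseteq\langle S\rangle$ for every $n$, whence $D\subseteq\langle S\rangle$ and $\langle S\rangle$ is dense. By the first paragraph $S$ is closed and discrete in $G$, so $S$ is a closed suitable set.

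The main obstacle is the simultaneous control of two topologies. The group arithmetic that makes the encoding faithful (inverses, products, disjointness of translates) is natural in the coarser topology $\tau^{\flat}$ of $G^{\flat}$, where symmetric neighborhoods are available, whereas the conclusion ``closed and discrete'' must hold in the finer topology $\tau$ of $G$. Reconciling the two is exactly what the escape family $\{U_n\}$ of $G^{\flat}$-open, $G$-locally-finite sets achieves, and it is here that non-feeble compactness is indispensable: in contrast to topological groups, where non-precompactness already precludes feeble compactness, for paratopological groups one must assume it outright to obtain such a family. A secondary point requiring care is that the $\sigma$-discrete decomposition lives in $G$ and does not descend to $G^{\flat}$, so one cannot simply apply~\cite[Theorem 3.6]{DTT1999} to $G^{\flat}$ and pull the result back; the construction must be re-run with the pieces $D_n$ kept in the finer topology throughout.
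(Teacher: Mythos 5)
There is a genuine gap at the heart of your construction: the step that is supposed to make $S$ simultaneously closed and discrete in $G$ and generating for all of $D=\bigcup_n D_n$. Your mechanism routes the new generators into the rooms $U_n=\bigcup_{k\ge n}W_k$ obtained from non-feeble compactness, imitating Proposition~\ref{l15+}; but that argument succeeds there only because the dense subgroup is countable, so each stage contributes finitely many points and $O\cap S$ ends up finite for a suitable neighborhood $O$. Here the pieces $D_n$ may be uncountable, and then every reading of your encoding breaks down. If each $d\in D_n$ gets its own pair of generators ``pushed into $U_n$ through disjoint translates,'' you need $|D_n|$-many pairwise separated translates of a symmetric $G^\flat$-neighborhood; non-precompactness of $G^\flat$ yields only countably many by the usual induction ($a_{n+1}\notin\{a_0,\dots,a_n\}V^2$), and there is in any case no way to force them all inside the prescribed open set $U_n$. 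If instead you use one slot per piece and translate $D_n$ wholesale, the translate $a_nD_n$ is not confined to any slot or room, and a countable union of such closed discrete sets need not be closed. So the two claims ``$D_n\subseteq\langle S\rangle$ for every $n$'' and ``$S$ is closed and discrete in $G$'' are not both secured by what you describe.

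The paper resolves exactly this tension by a different device, which your proposal omits. It dispenses with the rooms $\{U_n\}$ altogether: it takes a symmetric $G^\flat$-neighborhood $V$ with $V^4\subset U$ and a countably infinite set $A$ such that $\{aV:a\in A\}$ is discrete in $G^\flat$ (hence in $G$), re-indexes the strictly $\sigma$-discrete decomposition by $A$, and routes only the trace $F_a=(H_a\cap V)\cup\{e\}$ of each piece into the slot $aV$. Then $S=\bigcup_{a\in A}aF_a$ is closed and discrete by Lemma 6.1 of~\cite{CMRS1998}, but $\langle S\rangle$ is dense only in the \emph{open subgroup} $\langle V\cup A\rangle$, and the proof is finished by the open-subgroup reduction, Proposition~\ref{t6}. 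The restriction of the pieces to $V$ and the final appeal to Proposition~\ref{t6} are precisely the two ingredients that make the bookkeeping consistent, and both are missing from your outline; without them the construction cannot be completed as stated.
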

\begin{proof}
By Lemma~\ref{l:Th5BR2001},
the group $G^\flat$ is non-precompact too. That is $G^\flat$
has a neighborhood $U$ of the identity such that $FU\ne G$ for any finite subset
$F$ of $G$. Pick an arbitrary symmetric neighborhood $V$ in $G^\flat$ of
the identity such that $V^4\subset U$. It can be shown
(see, for instance,~\cite[Remark 5.4]{CMRS1998})
that the family $\{aV:a\in A\}$ is discrete in $G^\flat$ for some countably infinite subset $A$ of $G$.
For each $a\in A$ let $H_a$ be a closed discrete subspace of $G$ such that
$H=\bigcup_{a\in A} H_a$ is dense in $G$. Then $F_a=H_a\cap V \cup\{e\}$ is
a closed discrete subspace of $G$ for each $a\in A$
and a set $F=\bigcup_{a\in A} F_a$ is dense in $V$.
Since $\{aF_a: a\in A\}$ is a locally finite family
of closed discrete subsets, by Lemma 6.1 from~\cite{CMRS1998}
its union $S$ is closed and discrete.
Since $\langle F\rangle$ is dense in $V$ and
$\langle S \rangle\supset F$, $\langle S \rangle$ is dense in
the subgroup $\langle V\cup A\rangle$ of $G^\flat$.
So $S$ is a closed suitable set for the open subgroup
$\langle V\cup A\rangle$ of $G$. By Proposition~\ref{t6}, the group $G$ has a closed
suitable set as well.
\end{proof}

A regular space is {\it a $\sigma$-space} provided it has $\sigma$-discrete
(equivalently, $\sigma$-locally finite) network, see~\cite[4.3]{Gru}.
If a topological group is a $\sigma$-space then by Corollary 3.10 from~\cite{DTT1999}
it has a suitable set. This suggests the following

\begin{question}\label{qd}
Does every regular paratopological group which is a $\sigma$-space has a suitable set?
\end{question}

The following proposition provides a partial affirmative answer to Question~\ref{qd}.

\begin{proposition}\label{qdpart+} Every non-precompact saturated paratopological group $G$
which is a $\sigma$-space has a suitable set.
\end{proposition}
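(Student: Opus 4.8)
The plan is to reduce the statement to Proposition~\ref{ttt+}, so that $G$ will in fact carry a \emph{closed} suitable set. Since $G$ is saturated and non-precompact by hypothesis, the only thing to do is to verify the two remaining assumptions of Proposition~\ref{ttt+}: that $G$ possesses a dense strictly $\sigma$-discrete subspace, and that $G$ is not feebly compact.

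For the first assumption I would use that $G$, being a $\sigma$-space, admits a network $\mathcal N=\bigcup_{n\in\omega}\mathcal N_n$ in which every family $\mathcal N_n$ is discrete. Choosing for each nonempty $N\in\mathcal N_n$ a point $p_N\in N$ and setting $D_n=\{p_N:N\in\mathcal N_n\}$, the discreteness of $\mathcal N_n$ guarantees that every point of $G$ has a neighbourhood meeting $D_n$ in at most one point; since $G$ is $T_1$ this makes each $D_n$ a closed discrete subspace of $G$. The network property then shows that $D=\bigcup_{n\in\omega}D_n$ meets every nonempty open set, so $D$ is a dense strictly $\sigma$-discrete subspace of $G$.

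For the second assumption I would exploit the interaction of non-precompactness with saturation exactly as in the proof of Proposition~\ref{ttt+}: by Lemma~\ref{l:Th5BR2001} the group reflexion $G^\flat$ is again non-precompact, so there are a symmetric neighbourhood $V$ of the identity in $G^\flat$ and a countably infinite set $A\subset G$ for which the family $\{aV:a\in A\}$ is discrete in $G^\flat$. As $\tau^\flat\subset\tau$, each $aV$ is open in $G$ and the family $\{aV:a\in A\}$ remains discrete---hence locally finite---in $G$. An infinite locally finite family of nonempty open sets is incompatible with feeble compactness, so $G$ is not feebly compact. With all four hypotheses in hand, Proposition~\ref{ttt+} yields a closed suitable set for $G$, which in particular is a suitable set.

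I expect the genuine obstacle to be the second assumption rather than the first. A $\sigma$-space can perfectly well be feebly compact without being compact (feebly compact non-compact $\sigma$-spaces exist, e.g. a Mr\'owka $\Psi$-space built from a maximal almost disjoint family), so one cannot discard the feebly compact case by purely topological generalized-metric arguments. What saves the situation is the group structure: through saturation and the passage to $G^\flat$, non-precompactness forces out an infinite discrete family of open sets, and it is precisely this family that contradicts feeble compactness. Once this point is isolated, the remainder is the routine selection argument of the second paragraph together with a direct appeal to Proposition~\ref{ttt+}.
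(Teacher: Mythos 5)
Your proof is correct, but it takes a genuinely different route from the paper in the feebly compact case. The paper argues by cases: if $G$ is not feebly compact it applies Proposition~\ref{ttt+} directly (leaving implicit the routine verification, which you spell out, that a $T_1$ $\sigma$-space has a dense strictly $\sigma$-discrete subspace), while if $G$ is feebly compact it quotes Proposition~3.15 of~\cite{BR2020} to conclude that $G$ is then a topological group and finishes with Corollary~3.10 of~\cite{DTT1999}. You instead show that the feebly compact case is vacuous: via Lemma~\ref{l:Th5BR2001} the reflexion $G^\flat$ is non-precompact, the translate construction of \cite[Remark 5.4]{CMRS1998} (exactly as used in the proof of Proposition~\ref{ttt+}) produces an infinite family of nonempty $\tau^\flat$-open sets discrete in $G^\flat$, and since $\tau^\flat\subset\tau$ this is an infinite locally finite family of nonempty open sets of $G$, contradicting feeble compactness. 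This observation is sound (note that it also shows the hypothesis ``non-feebly compact'' in Proposition~\ref{ttt+} is redundant), every ingredient you use is already endorsed by the paper's own proof of Proposition~\ref{ttt+}, and it buys a marginally stronger conclusion: a \emph{closed} suitable set, which the paper's feebly compact branch does not claim. The paper's approach, by contrast, avoids re-running the reflexion argument at the cost of importing the two external results from~\cite{BR2020} and~\cite{DTT1999}. Your diagnosis of where the difficulty lies is also apt: the $\sigma$-space structure alone cannot exclude feeble compactness, and it is saturation plus non-precompactness that does the work.
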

\begin{proof} If $G$ is non-feebly compact then $G$ has a suitable set by
Proposition~\ref{ttt+}. If $G$ is feebly compact then $G$ is a topological group
by Proposition 3.15 from~\cite{BR2020} and so $G$ has a suitable set by
Corollary 3.10 from~\cite{DTT1999}.
\end{proof}

An {\it almost topological group} is a paratopological group $(G,\tau)$
admitting a Hausdorff group topology $\gamma\subset\tau$ and
a local base $\mathscr{B}$ at the identity $e$ of the group $(G, \tau)$
such that the set $U\setminus \{e\}\in\gamma$ for each $U\in\mathscr{B}$.
In this case we say that $G$ is an almost topological group with
a {\it structure} $(\tau, \gamma, \mathscr{B})$. \cite{Fer2012}

\begin{remark} If in the proof of Example~\ref{ex:GflatwoSS} we take
as $\mathscr{B}$ a local base at the identity of the group $L$
consisting of {\it open} convex sets, we construct
a Hausdorff saturated paratopological group $(G,\tau)$
which is a sum of two of its closed discrete subsets such that
$(\tau,\tau_\flat,\check{\mathscr{B}})$ is a structure of
an almost topological group on $G$, but
the group reflexion $G^{\flat}=(G,\tau_\flat)$ has no suitable set.
\end{remark}

\section{The permanence properties}\label{sec:perm}

\subsection{Open subgroups}
Similarly to the proof of \cite[Theorem 4.2]{CMRS1998}, we can show the following

\begin{proposition}\label{t6} If an open subgroup of a paratopological group $G$
has a (closed) suitable set, then $G$ has a (closed) suitable set.
\end{proposition}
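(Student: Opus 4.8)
The plan is to adapt the topological-group argument of \cite[Theorem 4.2]{CMRS1998} by taking the given suitable set of the open subgroup and augmenting it with a set of coset representatives. Let $H$ be an open subgroup of $G$ admitting a suitable set $S_H$. First I would record two standing facts valid in any paratopological group: each left shift $L_g\colon x\mapsto gx$ is a homeomorphism (its continuous inverse is $L_{g^{-1}}$), and consequently $H$ is closed, since $G\setminus H=\bigcup_{gH\ne H}gH$ is a union of open cosets. Because $H$ is closed in $G$ and $\langle S_H\rangle$ is dense in $H$, one gets $\overline{\langle S_H\rangle}=H$, the closure being taken in $G$.

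Next, fix a family $\{g_\alpha:\alpha\in A\}$ of representatives of the left cosets of $H$ in $G$ with $g_0=e$ representing $H$, and set $S=S_H\cup\{g_\alpha:\alpha\in A\setminus\{0\}\}$. For density, note that $\langle S_H\rangle\subseteq\langle S\rangle$ and $g_\alpha\in\langle S\rangle$, so $g_\alpha\langle S_H\rangle\subseteq\langle S\rangle$ for every $\alpha$; applying the homeomorphism $L_{g_\alpha}$ to $\overline{\langle S_H\rangle}=H$ yields $\overline{g_\alpha\langle S_H\rangle}=g_\alpha H$, whence $\overline{\langle S\rangle}\supseteq\bigcup_{\alpha}g_\alpha H=G$. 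Thus $\langle S\rangle$ is dense in $G$.

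For the discreteness and closedness of $S$ and of $S\cup\{e\}$, I would argue coset by coset, using that each $g_\alpha H$ is open and that $G$ is $T_1$. A point of $S_H$ has a neighborhood inside $H$ meeting $S_H$ in that point alone (as $S_H$ is discrete in $H$) and missing all the representatives, which lie off $H$; a representative $g_\alpha$ has the open neighborhood $g_\alpha H$, which meets $S$ only in $g_\alpha$. This gives discreteness. For closedness of $S\cup\{e\}$, a point $x\notin S\cup\{e\}$ lying in $H$ is separated from $S_H\cup\{e\}$ within $H$ (the latter being closed in $H$, hence in $G$), while a point $x\notin S$ in some $g_\alpha H$ with $\alpha\ne0$ is separated from $g_\alpha$ by the open set $g_\alpha H\setminus\{g_\alpha\}$, using $T_1$. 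The closed-suitable-set case is identical: if $S_H$ is closed in $H$ (hence in $G$), the same coset-by-coset separation shows $S$ is closed in $G$.

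The computations are routine; the only point requiring care is that inversion need not be continuous, so I must rely throughout on left shifts being homeomorphisms rather than on any symmetry of neighborhoods. This is exactly what keeps the openness (and hence closedness) of cosets, the identity $\overline{g_\alpha\langle S_H\rangle}=g_\alpha H$, and the separation arguments valid in the paratopological setting. I expect no further obstacle beyond bookkeeping of the three cases $x\in H$, $x=g_\alpha$, and $x\in g_\alpha H\setminus\{g_\alpha\}$.
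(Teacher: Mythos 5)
Your argument is correct and is exactly the adaptation the paper intends: the paper proves this proposition only by reference to \cite[Theorem 4.2]{CMRS1998}, whose proof is precisely your construction of augmenting the suitable set of the open (hence closed) subgroup $H$ by a set of coset representatives, with left translations serving as homeomorphisms in place of any use of continuity of inversion. Your coset-by-coset verification of discreteness and closedness (using that $G$ is $T_1$, per the paper's standing assumption) fills in the details faithfully, so nothing further is needed.
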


\begin{remark} (cf.,~\cite[Theorem 4.7]{CMRS1998}). Let $G$ be any paratopological group and $G'$ be the group $G$ endowed
with the discrete topology. Then the group $G\times G'$ is generated by a closed discrete
set $G\times \{e\}\cup \{(x,x):x\in G\}$ and $G$ is naturally isomorphic to an open
subgroup $G\times \{e\}$ of $G\times G'$.
\end{remark}

\subsection{Dense subgroups}\label{sec:perm2}
A subset $D$ of a space $X$ is called {\it strongly discrete} if each point $x\in D$
has a neighborhood $U_x$ such that the family $\{U_x:x\in D\}$ is {\em discrete},
that is each point of $X$ has a neighborhood intersecting at most one set of the family.
It is easy to see that each strongly discrete subset of a $X$ is discrete and closed in $X$.
A space is called {\it collectionwise Hausdorff} if each its closed discrete subset
is strongly discrete.
A subset $D$ of a space $X$ is called {\it sequentially dense} (in $X$) if each point $x\in X$
is a limit of a sequence of points of $D$. Clearly, each dense subset of a
Fr\'{e}chet-Urysohn space is sequentially dense.

A dense subgroup a topological group $G$ with a
suitable set need not have a suitable set even when $G$ is compact,
see \cite[Corollary 2.9]{DTT1999}.

\begin{proposition}\label{t2}
Let $G$ be a hereditarily collectionwise Hausdorff paratopological group with a suitable set $S$ and
$H$ be a sequentially dense subgroup of $G$.
Then $H$ has a suitable set of size at most $|S|\cdot\aleph_0$.
\end{proposition}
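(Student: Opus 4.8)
The plan is to manufacture a suitable set for $H$ by replacing each point of the given suitable set $S$ of $G$ by a sequence drawn from $H$, exploiting the sequential density of $H$ to keep everything countable per point (this is what yields the bound $|S|\cdot\aleph_0$). First I would discard $e$ and work in the open subspace $G\setminus\{e\}$, in which $S\setminus\{e\}$ is closed and discrete because $S\cup\{e\}$ is closed in $G$ and $S$ is discrete. Since $G$ is hereditarily collectionwise Hausdorff, the subspace $G\setminus\{e\}$ is collectionwise Hausdorff, so $S\setminus\{e\}$ is strongly discrete there: I obtain a discrete family $\{U_s:s\in S\setminus\{e\}\}$ of open subsets of $G$ with $s\in U_s$ and $e\notin U_s$. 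These neighborhoods pre-separate the approximating sequences and will be the source of both discreteness and closedness of the set I build.

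Next, for each $s\in S\setminus\{e\}$ I would use sequential density to choose a sequence $(h_{s,n})_n$ in $H\cap U_s$ with $h_{s,n}\to s$, taking the constant sequence $h_{s,n}=s$ whenever $s\in H$. Put $T^{+}=\{h_{s,n}:s\in S\setminus\{e\},\ n\in\omega\}\subseteq H$. Within each $U_s$ the set $T^{+}\cap U_s$ is either a nontrivial sequence converging to the point $s\notin T^{+}$ or the singleton $\{s\}$, hence discrete; as $\{U_s\}$ is a discrete family, $T^{+}$ is discrete. For closedness I would check that the accumulation points of $T^{+}$ in $G$ lie in $(S\setminus\{e\})\cup\{e\}$; those $s\notin H$ simply fall outside $H$, while those $s\in H$ were put into $T^{+}$ as constant sequences, so $T^{+}\cup\{e\}$ is closed in $H$. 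Thus $T^{+}$ is already a discrete subset of $H$ with $T^{+}\cup\{e\}$ closed in $H$ and, since $h_{s,n}\to s$, with $S\subseteq\overline{T^{+}}\subseteq\overline{\langle T^{+}\rangle}$.

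The crux is density of the generated subgroup, and here the asymmetry of paratopological groups is the real obstacle. In a topological group one would be done: the closure of a subgroup is a subgroup, so $S\subseteq\overline{\langle T^{+}\rangle}$ would give $\langle S\rangle\subseteq\overline{\langle T^{+}\rangle}$ and hence $\overline{\langle T^{+}\rangle}\supseteq\overline{\langle S\rangle}=G$. In a paratopological group, however, $\overline{\langle T^{+}\rangle}$ is only a closed subsemigroup, so from $S\subseteq\overline{\langle T^{+}\rangle}$ one gets merely the subsemigroup generated by $S$, not the subgroup $\langle S\rangle$. To repair this I must also force every inverse $s^{-1}$ into the closure. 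For $s\in H$ this is automatic, since then $s\in\langle T^{+}\rangle$ and so $s^{-1}\in\langle T^{+}\rangle$; for $s\in S\setminus H$ I would, in parallel with the above, pick sequences $(k_{s,n})_n$ in $H$ with $k_{s,n}\to s^{-1}$ and adjoin $T^{-}=\{k_{s,n}\}$ to form $T=T^{+}\cup T^{-}$, still of size at most $|S|\cdot\aleph_0$. Then $S\cup S^{-1}\subseteq\overline{\langle T\rangle}$; since $\overline{\langle T\rangle}$ is a closed subsemigroup and the subsemigroup generated by $S\cup S^{-1}$ is exactly the subgroup $\langle S\rangle$, we obtain $\overline{\langle T\rangle}\supseteq\overline{\langle S\rangle}=G$. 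As $\langle T\rangle\subseteq H$ and $H$ is dense in $G$, this makes $\langle T\rangle$ dense in $H$.

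The main difficulty I expect is preserving discreteness and closedness after adjoining $T^{-}$. The points $s^{-1}$ are the new accumulation points, and because inversion is not continuous the set $S^{-1}\setminus\{e\}$ need not be closed and discrete in $G\setminus\{e\}$, so I cannot separate the sequences $(k_{s,n})_n$ by simply invoking collectionwise Hausdorffness on $S^{-1}$ the way I did for $S$. The careful part of the argument is therefore to choose the $k_{s,n}$ so that the full limit set $(S\cup S^{-1})\setminus\{e\}$ still admits a discrete separating family of open sets in $G\setminus\{e\}$ --- this is again where collectionwise Hausdorffness of the subspace, that is hereditariness, is used --- and to verify that every new accumulation point $s^{-1}$ with $s\notin H$ lies outside $H$ and hence does not spoil closedness of $T\cup\{e\}$ in $H$, while the points with $s\in H$ contribute nothing new. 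Once the separating family is in hand, discreteness and closedness of $T$ in $H$ follow exactly as for $T^{+}$, and the proof is complete.
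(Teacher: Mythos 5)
Your first two paragraphs reproduce the paper's proof almost verbatim: the paper also applies collectionwise Hausdorffness of $G\setminus\{e\}$ to get a discrete family $\{U_x:x\in S\}$ with $e\notin U_x$, replaces each $x\in S\setminus H$ by a sequence $S_x$ of distinct points of $U_x\cap H$ converging to $x$ (and keeps $S_x=\{x\}$ for $x\in H$), and invokes Lemma~6.1 of the Comfort--Morris--Robbie--Svetlichny--Tkachenko paper to conclude that the union $S'$ (your $T^{+}$) is closed and discrete off the identity. Up to that point you and the paper agree. Where you diverge is the density step: the paper simply writes that $\overline{\langle S'\rangle}\supset S$ and $\overline{\langle S\rangle}=G$ imply $\overline{\langle S'\rangle}=G$, and does \emph{not} adjoin any points approximating inverses. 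Your observation that this deduction is not automatic in a paratopological group is pertinent: $\overline{\langle S'\rangle}$ is a priori only a closed subsemigroup (the closure of a subgroup of a paratopological group need not be a subgroup), so containing $S$ only forces it to contain the subsemigroup generated by $S\cup\langle S'\rangle$, and the subsemigroup generated by $S$ can fail to be dense even when $\langle S\rangle$ is (already in $\mathbb S$ the monoid generated by a suitable set need not be dense). So you have put your finger on the step the paper treats as immediate.

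The problem is that your repair does not close the gap you open; it relocates it. The entire weight of your third and fourth paragraphs rests on producing sequences $k_{s,n}\to s^{-1}$ in $H$ whose union, together with $T^{+}$, is still discrete with $T\cup\{e\}$ closed in $H$. You yourself note that $S^{-1}\setminus\{e\}$ need not be closed and discrete in $G\setminus\{e\}$, so collectionwise Hausdorffness --- which applies only to closed discrete subsets --- gives you no discrete family of open sets around the points $s^{-1}$; yet the final paragraph asserts, without argument, that the $k_{s,n}$ ``can be chosen'' so that $(S\cup S^{-1})\setminus\{e\}$ admits such a family. This is exactly where the construction can fail: since inversion is discontinuous, $S^{-1}$ may accumulate at points of $T^{+}$, at points of $S^{-1}$ itself, or at points of $H$, and then no choice of the approximating sequences makes $T$ discrete and $T\cup\{e\}$ closed in $H$ (note that sequential density lets you approximate $s^{-1}$, but gives you no control over where the limit points of the \emph{family} of sequences land). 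So as written your plan is missing its crucial step. To complete a proof along either line one must either justify the paper's direct deduction --- for instance by showing that in the situation at hand $\overline{\langle S'\rangle}$ does absorb $S^{-1}$ --- or find a different mechanism for placing $S^{-1}$ in $\overline{\langle T\rangle}$ that does not require separating $S^{-1}$ by open sets.
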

\begin{proof}
Since the space $G\setminus\{e\}$ is collectionwise Hausdorff then
each point $x\in S$ has a neighborhood $U_x\not\ni e$ in $G$ such that
the family $\{U_x:x\in S\}$ is discrete in $G\setminus\{e\}$.
If $x\in H$ then let $S_x=\{x\}$, otherwise
let $S_x$ be a sequence of distinct points of $U_x\cap H$, converging to $x$.
Since $\{S_x:x\in S\}$ is a locally finite family
of closed discrete subsets of the space $G\setminus\{e\}$,
by Lemma 6.1 from~\cite{CMRS1998} its union $S'$ is closed and discrete in $G\setminus\{e\}$.
Since $\overline{\langle S'\rangle}\supset S$ and $\overline{\langle S\rangle}=G$,
$\overline{\langle S'\rangle}=G$. It follows that $S'$ is a suitable set for $H$.
\end{proof}

Similarly we can prove the following

\begin{proposition}\label{t2.2}
Let $G$ be a collectionwise Hausdorff
paratopological group with a closed suitable set $S$ and
$H$ be a sequentially dense subgroup of $G$.
Then $H$ has a closed suitable set of size at most $|S|\cdot\aleph_0$.
\end{proposition}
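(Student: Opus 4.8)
The plan is to adapt, almost verbatim, the proof of Proposition~\ref{t2}, exploiting the two strengthenings present here: $S$ is now \emph{closed} in $G$ (not merely $S\cup\{e\}$ closed), and correspondingly we only need $G$ to be collectionwise Hausdorff rather than hereditarily so. First I would note that we may assume $e\notin S$ (if $e\in S$ it is isolated in the discrete set $S$, so $S\setminus\{e\}$ is again closed, discrete and generates a dense subgroup). Since $S$ is then a closed discrete subset of the collectionwise Hausdorff space $G$, it is \emph{strongly discrete} in $G$: each $x\in S$ has a neighbourhood $U_x$ in $G$, which after shrinking we may take with $e\notin U_x$, such that the family $\{U_x:x\in S\}$ is discrete in \emph{all} of $G$. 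This is exactly the point at which closedness of $S$ is used; in Proposition~\ref{t2} only discreteness of the analogous family in $G\setminus\{e\}$ was available, which is why hereditary collectionwise Hausdorffness had to be invoked there.

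Next, for each $x\in S$ I would set $S_x=\{x\}$ when $x\in H$, and otherwise use the sequential density of $H$ to pick a sequence $S_x$ of distinct points of $U_x\cap H$ converging to $x$. Put $S'=\bigcup_{x\in S}S_x\subset H$. Because $S_x\subset U_x$ and $\{U_x\}$ is discrete in $G$, the restricted family $\{S_x\}$ is locally finite in $H$; and I would argue that each $S_x$ is closed and discrete in $H$, its only limit point being $x$, which lies outside $H$ precisely in the case $x\notin H$. Lemma~6.1 of~\cite{CMRS1998} then yields that the union $S'$ is closed and discrete in $H$.

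For the generating property, every $x\in S$ either lies in $S'$ (when $x\in H$) or is a limit of the sequence $S_x\subset S'$, so $S\subset\overline{\langle S'\rangle}$; together with $\overline{\langle S\rangle}=G$ this forces $\overline{\langle S'\rangle}=G$, and intersecting with $H$ shows $\langle S'\rangle$ is dense in $H$. The cardinality bound is immediate, since $|S'|\le\sum_{x\in S}|S_x|\le|S|\cdot\aleph_0$. Hence $S'$ is a closed suitable set for $H$ of the asserted size.

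The step I expect to be the main obstacle is verifying that $S'$ is \emph{closed} in $H$, which is the genuine upgrade over Proposition~\ref{t2} (where only $S'\cup\{e\}$ closed is obtained). Two features conspire to give this: because $S$ is closed, the discrete family $\{U_x\}$ can be taken discrete across all of $G$ with $e\notin\bigcup_{x\in S}U_x$, so no accumulation of $S'$ at $e$ is created; and because each convergent sequence $S_x$ has its sole limit $x$ outside $H$ whenever $x\notin H$, no new limit points are introduced inside $H$. The care here is in choosing each $S_x$ so that it has no accumulation point in $H$ other than its own terms, so that $S_x$ is genuinely closed in $H$ and Lemma~6.1 of~\cite{CMRS1998} applies; once this is checked, the remaining verifications are routine.
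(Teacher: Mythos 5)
Your proposal is correct and follows essentially the same route as the paper, whose proof of Proposition~\ref{t2.2} is simply the indicated adaptation of the proof of Proposition~\ref{t2}: since $S$ itself is closed, collectionwise Hausdorffness of $G$ (rather than of $G\setminus\{e\}$) yields the discrete family $\{U_x\}$, and Lemma~6.1 of~\cite{CMRS1998} then gives a closed discrete $S'$. The one point you flag as delicate --- that each $S_x$ has $x$ as its only accumulation point --- is automatic because a $T_1$ collectionwise Hausdorff space is Hausdorff, so convergent sequences have unique limits.
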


\begin{corollary}\label{t2c}
Let $G$ be a metrizable paratopological group with a (closed) suitable set $S$ and
$H$ be a dense subgroup of $G$. Then $H$ has a (closed) suitable set of size at most $|S|\cdot\aleph_0$.
\end{corollary}
\begin{proof} It suffices to remark that $H$ is sequentially dense in $G$ and
$G$ is collectionwise normal, by Theorems 5.1.3 and 5.1.18 from ~\cite{Eng}.
\end{proof}

\begin{proposition}\label{t2.3}
Let $G$ be a regular collectionwise Hausdorff paratopological group of countable pseudocharater
with a suitable set $S$ and $H$ be a sequentially dense subgroup of $G$.
Then $H$ has a suitable set of size at most $|S|\cdot\aleph_0$.
\end{proposition}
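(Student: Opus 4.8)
The plan is to follow the strategy of Proposition~\ref{t2}, replacing each point $x\in S\setminus H$ by a sequence in $H$ that converges to $x$ and lies in a small neighbourhood $U_x$ of $x$, and then to take the union $S'$ of these sequences (together with the singletons $\{x\}$ for $x\in S\cap H$) as the candidate suitable set for $H$. The whole argument hinges on producing a family $\{U_x:x\in S\setminus\{e\}\}$ of neighbourhoods with $e\notin U_x$ that is locally finite in $G\setminus\{e\}$. In Proposition~\ref{t2} such a family came for free, because hereditary collectionwise Hausdorffness makes $G\setminus\{e\}$ itself collectionwise Hausdorff while $S$ is closed and discrete in it. Here we only know that $G$ is collectionwise Hausdorff, so \textbf{the main obstacle} is to manufacture this locally finite family from collectionwise Hausdorffness of $G$ alone; this is exactly where regularity and countable pseudocharacter enter.

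First I would use $\psi(G)\le\omega$ and regularity to fix a decreasing sequence $\{V_n\}_{n\in\omega}$ of open neighbourhoods of $e$ with $\overline{V_{n+1}}\subseteq V_n$ and $\bigcap_n\overline{V_n}=\{e\}$ (put $V_0=G$). Since $S\cup\{e\}$ is closed and $e\in V_n$, the shell $S_n=(S\cup\{e\})\cap(G\setminus V_n)=S\setminus V_n$ is a closed discrete subset of $G$; note $S\setminus\{e\}=\bigcup_n S_n$, and for $x\in S\setminus\{e\}$ there is a least level $m(x)\ge1$ with $x\notin V_{m(x)}$, so $x\in V_{m(x)-1}\setminus V_{m(x)}$. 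Applying collectionwise Hausdorffness to the closed discrete set $S_n$ yields a family $\{U^n_x:x\in S_n\}$ of open neighbourhoods that is discrete in $G$. Using regularity I would shrink the chosen neighbourhood $U_x:=U^{m(x)}_x$ so that $U_x\subseteq V_{m(x)-1}\setminus\overline{V_{m(x)+1}}$, which is possible because $x\in V_{m(x)-1}$ (an open set) and $x\notin V_{m(x)}\supseteq\overline{V_{m(x)+1}}$; in particular $e\notin U_x$, and shrinking members of a discrete family keeps the family discrete.

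It remains to check that $\{U_x:x\in S\setminus\{e\}\}$ is locally finite in $G\setminus\{e\}$. Given $y\ne e$, choose $N$ with $y\notin\overline{V_N}$; then $O=G\setminus\overline{V_N}$ is a neighbourhood of $y$ missing $e$. If $m(x)\ge N+1$ then $U_x\subseteq V_{m(x)-1}\subseteq V_N\subseteq\overline{V_N}$, so $U_x\cap O=\varnothing$; hence only the finitely many levels $n\le N$ contribute, and within each such level $\{U_x:m(x)=n\}$ is a subfamily of the discrete family $\{U^n_x:x\in S_n\}$. Intersecting $O$ with one discreteness-witnessing neighbourhood of $y$ per level gives a neighbourhood of $y$ meeting only finitely many $U_x$, proving local finiteness in $G\setminus\{e\}$. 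From here the construction mirrors Proposition~\ref{t2}: for $x\in S\cap H$ set $S_x=\{x\}$, and for $x\in S\setminus H$ let $S_x\subseteq (U_x\cap H)\setminus S$ be a sequence of distinct points converging to $x$, which exists since $H$ is sequentially dense and a sequence of distinct points of $S$ cannot converge to the isolated point $x$ of $S$. The union $S'=\bigcup_{x\in S\setminus\{e\}} S_x$ is then discrete in $G\setminus\{e\}$ (local finiteness, plus each $S_x$ accumulating only at $x\notin S'$), so $S'$ is discrete in $H$; its closure in $G$ is contained in $S'\cup S\cup\{e\}$, whence $\overline{S'}^{\,G}\cap H\subseteq S'\cup\{e\}$ and $S'\cup\{e\}$ is closed in $H$; and since each $S_x$ converges to $x$ we get $S\subseteq\overline{\langle S'\rangle}$, so $\overline{\langle S'\rangle}\supseteq\overline{\langle S\rangle}=G$ and $\langle S'\rangle$ is dense in $H$. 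As every $S_x$ is countable, $|S'|\le|S|\cdot\aleph_0$, which completes the plan.
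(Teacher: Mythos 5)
Your proposal is correct and follows essentially the same route as the paper's proof: both use regularity and countable pseudocharacter to split $S\setminus\{e\}$ into countably many closed discrete pieces confined to annuli around $e$, apply collectionwise Hausdorffness to each piece to get a family of neighbourhoods that is locally finite in $G\setminus\{e\}$, and then replace each point by a convergent sequence from $H$ (the paper works with $S\cap(U_n\setminus\overline{U_{n+2}})$ where you work with $S\setminus V_n$ and then shrink by least level, a purely cosmetic difference). Your write-up is somewhat more explicit about the local finiteness and closedness verifications that the paper leaves as "easy to see."
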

\begin{proof}
Let $\{U_n:n\in\omega\}$ be a non-increasing sequence of open subsets of $G$ such
that $U_0=G$ and $\bigcap\{U_n\}=\{e\}$.
Let $n\in\omega$ be any number. Put $V_n=U_n\setminus \overline{U_{n+2}}$.
Since the space $G$ is collectionwise Hausdorff,
each point $x$ of a closed discrete subspace $S\cap V_n$ of $G$ has a neighborhood
$U_{x,n}\subset V_n$ such that the family $\{U_{x,n}:x\in S\}$ is discrete.
If $x\in H$ then let $S_{x,n}=\{x\}$, otherwise
let $S_{x,n}$ be a sequence of distinct points of $U_{x,n}\cap H$, converging to $x$.
It is easy to see that $\{S_{x,n}:n\in\omega,\, x\in S\cap V_n\}$ is a locally finite family
of closed discrete subsets of the space $G\setminus\{e\}$,
so, by Lemma 6.1 from~\cite{CMRS1998}, its union $S'$ is closed and discrete in $G\setminus\{e\}$.
Since $\overline{\langle S'\rangle}\supset S$ and $\overline{\langle S\rangle}=G$,
$\overline{\langle S'\rangle}=G$. It follows that $S'$ is a suitable set for $H$.
\end{proof}

\subsection{Products}\label{sec:perm3}
 Let $G$ be the product of a family $\Gamma=\{G_i\colon i\in I\}$ of paratopological groups.
The $\Sigma$-($\sigma$-) product of $\Gamma$ is a subgroup of $G$ consisting of all
elements with countably (finitely) many non-identity coordinates.
%Remark that by~\cite[Proposition 2.1]{GutRav}, the ($\Sigma$-) product
%of a family of sequentially pracompact spaces is sequentially pracompact.

Similarly to the proof of \cite[Theorem 4.1]{CMRS1998}, we can show the following

\begin{proposition} Let $\Gamma$ be a non-empty family of paratopological groups
such that each member of it has a suitable set. Then the product of $\Gamma$
has a suitable set contained in the $\sigma$-product of $\Gamma$. It follows
that both the $\sigma$- and $\Sigma$-product of $\Gamma$ has a suitable set.
\end{proposition}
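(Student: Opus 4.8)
The plan is to build a suitable set for the product $G=\prod_{i\in I}G_i$ directly from suitable sets of the factors, placed on the coordinate axes. For each $i\in I$ I fix a suitable set $S_i$ of $G_i$; replacing $S_i$ by $S_i\setminus\{e\}$ if necessary, I may assume $e\notin S_i$, since this alters neither $\langle S_i\rangle$ nor the closed set $S_i\cup\{e\}$. Let $\iota_i\colon G_i\to G$ be the canonical embedding sending $x$ to the point whose $i$-th coordinate is $x$ and whose other coordinates equal the identity, and put $S=\bigcup_{i\in I}\iota_i(S_i)$. Every element of $S$ has exactly one non-identity coordinate, so $S$ lies in the $\sigma$-product of $\Gamma$, as required; it remains to verify the three defining properties of a suitable set.

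Density is the easy part. Since elements lying on distinct coordinate axes commute, $\langle S\rangle$ equals the direct sum $\bigoplus_{i\in I}\langle S_i\rangle$, the subgroup of elements with finitely many non-identity coordinates, each lying in the corresponding $\langle S_i\rangle$. As every $\langle S_i\rangle$ is dense in $G_i$ and a basic open set of $G$ constrains only finitely many coordinates, this subgroup meets every non-empty basic open set; hence $\langle S\rangle$ is dense in $G$.

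The heart of the argument, and the step I expect to be the main obstacle, is to show that $S$ is discrete and that $S\cup\{e\}$ is closed. Here I would use the standing $T_1$ assumption twice. Because each $G_i$ is $T_1$, each coordinate subgroup $\hat G_j:=\iota_j(G_j)=\bigcap_{i\ne j}\pi_i^{-1}(\{e\})$ is closed in $G$, and $\iota_j$ is a topological embedding, so $\overline{\iota_j(S_j)}=\iota_j(\overline{S_j})\subseteq\iota_j(S_j\cup\{e\})=\iota_j(S_j)\cup\{e\}$. Now take any $x\ne e$ and a coordinate $j$ with $x_j\ne e$; using $T_1$ in $G_j$ choose an open $W\ni x_j$ with $e\notin W$. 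The neighborhood $N=\pi_j^{-1}(W)$ of $x$ misses every $\iota_i(S_i)$ with $i\ne j$, since those points have $j$-th coordinate $e\notin W$, so $S\cap N\subseteq\iota_j(S_j)$. Consequently any accumulation of $S$ at $x$ is an accumulation of $\iota_j(S_j)$, forcing $x\in\overline{\iota_j(S_j)}\subseteq\iota_j(S_j)\cup\{e\}$, hence $x\in S$; this proves $S\cup\{e\}$ is closed. When moreover $x\in S$, shrinking $W$ so that $W\cap S_j=\{x_j\}$ (possible as $S_j$ is discrete) yields $S\cap\pi_j^{-1}(W)=\iota_j(S_j\cap W)=\{x\}$, proving discreteness.

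Finally, the consequence for the $\sigma$- and $\Sigma$-products is essentially free. Writing $P$ for either the $\sigma$- or the $\Sigma$-product, we have $\langle S\rangle=\bigoplus_{i\in I}\langle S_i\rangle\subseteq P\subseteq G$ together with $S\cup\{e\}\subseteq P$. Density of $\langle S\rangle$ in $G$ then forces density in the intermediate subgroup $P$, while discreteness of $S$ and closedness of $S\cup\{e\}$ pass directly to the subspace $P$. Thus the same $S$ is a suitable set for each of these subgroups. I anticipate no difficulty beyond the closedness and discreteness analysis above, which is precisely where the $T_1$ hypothesis on the factors (inherited from the standing convention) is indispensable: it makes the coordinate subgroups $\hat G_j$ closed and lets a single projection isolate one axis.
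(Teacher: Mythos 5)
Your proof is correct and follows essentially the same route the paper intends: the paper gives no details but defers to the proof of Theorem 4.1 of Comfort et al., which is exactly this construction of placing the suitable sets $S_i$ (with $e$ removed) on the coordinate axes and taking their union inside the $\sigma$-product. Your verification of closedness and discreteness via the closed coordinate subgroups $\iota_j(G_j)$ and the projections $\pi_j$, and the observation that density, discreteness, and closedness all restrict to the intermediate subgroups, match that standard argument.
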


The proof of the following proposition is similar to that of~\cite[Lemma 3.1]{DTT1999}.

\begin{proposition}\label{l1} If a paratopological group $G$ contains a closed discrete set $A$
such that $|A|\geq d(G)$ then $G\times G\in \mathcal{S}_{c}$.
Moreover, if $|A|=|G|$ then $G\times G\in \mathcal{S}_{cg}$.
\end{proposition}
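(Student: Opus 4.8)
The plan is to construct an explicit closed suitable set $S\subset G\times G$ by placing the closed discrete set $A$ in one coordinate and a dense set $D$ in the other (and symmetrically), so that a short cancellation trick recovers dense subsets of both factors from $S$.

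First I would dispose of the degenerate case. Since all spaces are assumed $T_1$, every finite subset is closed; hence if $d(G)$ were finite, a finite dense set would equal its own closure and so coincide with $G$, making $G$ (and $G\times G$) finite and discrete, where the statement is trivial. So I may assume $\kappa:=d(G)$ is infinite, whence $|A|\ge\kappa=\kappa+\kappa$ and I can pick two disjoint families $\{a_\alpha:\alpha<\kappa\}$ and $\{b_\alpha:\alpha<\kappa\}$ of distinct points of $A$. Fix a dense subset $D=\{d_\alpha:\alpha<\kappa\}$ of $G$; for the ``moreover'' part, when $|A|=|G|=:\lambda$, take instead $D=G$ indexed by $\lambda$, which is admissible because $2\lambda=\lambda\le|A|$.

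Next I would set
\[ S=\{(a_\alpha,e):\alpha<\kappa\}\cup\{(a_\alpha,d_\alpha):\alpha<\kappa\}\cup\{(e,b_\alpha):\alpha<\kappa\}\cup\{(d_\alpha,b_\alpha):\alpha<\kappa\}. \]
Density of $\langle S\rangle$ is the easy half: cancelling gives $(a_\alpha,e)^{-1}(a_\alpha,d_\alpha)=(e,d_\alpha)$ and $(e,b_\alpha)^{-1}(d_\alpha,b_\alpha)=(d_\alpha,e)$, so $\langle S\rangle\supset(D\times\{e\})\cup(\{e\}\times D)$ and therefore $\langle S\rangle\supset\langle D\rangle\times\langle D\rangle$. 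Since $D$ is dense, $\langle D\rangle$ is dense in $G$, so $\langle D\rangle\times\langle D\rangle$ is dense in $G\times G$ and $\langle S\rangle$ is dense. When $D=G$ this forces $\langle S\rangle=G\times G$, yielding the ``moreover'' conclusion $G\times G\in\mathcal{S}_{cg}$.

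The main point---and the step where the paratopological setting genuinely differs from the topological one of \cite[Lemma 3.1]{DTT1999}---is to verify that $S$ is closed and discrete without invoking continuity of inversion. Here I would prove the stronger assertion that $S$ is locally finite in $G\times G$, which in a $T_1$ space forces $S$ to be closed and discrete. Given $(p,q)\in G\times G$, I would choose a basic neighbourhood $U\times W$ thus: if $p\in A$ take $U$ with $U\cap A=\{p\}$, otherwise $U$ with $U\cap A=\varnothing$, and choose $W$ analogously with respect to $q$. The first two families of $S$ have first coordinate in $A$, so $(U\times W)\cap(\text{first two families})$ is empty when $p\notin A$ and has at most two points when $p\in A$ (the $a_\alpha$ being distinct, at most one equals $p$). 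Symmetrically, the last two families have second coordinate in $A$, so their trace on $U\times W$ is finite. Hence $(U\times W)\cap S$ is finite, $S$ is locally finite, and therefore a closed discrete---in particular closed suitable---set for $G\times G$, giving $G\times G\in\mathcal{S}_c$. The only subtlety I anticipate is bookkeeping: keeping the $a$'s and $b$'s disjoint and the indices aligned so the cancellations are legitimate; the closed-discreteness argument itself uses only $T_1$-ness and discreteness of $A$, so it transfers verbatim from the set $A$ to the product.
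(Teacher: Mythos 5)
Your proof is correct and follows essentially the same route as the paper, which simply adapts the argument of Lemma~3.1 of Dikranjan--Tkachenko--Tkachuk: the four-piece set combining $A$ with a dense set in each coordinate, the cancellation $(a_\alpha,e)^{-1}(a_\alpha,d_\alpha)=(e,d_\alpha)$, and the observation that closedness and discreteness of $S$ need only the $T_1$ axiom and the closed discreteness of $A$ (no continuity of inversion). Your explicit local-finiteness verification is exactly the point that makes the topological-group proof transfer to paratopological groups.
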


A {\it pseudocharacter $\psi(X)$} of a space $X$ is the smallest
infinite cardinal $\kappa$ such that any point of $X$ is an intersection of at most $\kappa$
open subsets of $X$ and {\it extent $e(X)$}
is the supremum of cardinalities of closed discrete subspaces of $X$.

The following proposition complements Proposition~\ref{l1} and its proof is
similar to the proof of~\cite[Lemma 2.3]{DTT1999}.

\begin{proposition}\label{prop:DPTGSS} If $G$ is a paratopological group with
a suitable set then $d(G)\leq \psi(G)e(G)$. Moreover,
if $G$ has a closed suitable set then $d(G)\le e(G)$.
\end{proposition}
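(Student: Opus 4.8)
The plan is to bound $d(G)$ through the subgroup $\langle S\rangle$ generated by a suitable set $S$, and then to estimate $|S|$ itself in terms of $\psi(G)$ and $e(G)$. Since $\langle S\rangle$ is dense in $G$ and the subgroup generated by a set $S$ has cardinality at most $|S|\cdot\aleph_0$, we get at once $d(G)\le|\langle S\rangle|\le|S|\cdot\aleph_0$. Because $\psi(G)\ge\aleph_0$ and $e(G)\ge\aleph_0$, it therefore suffices to prove $|S|\le\psi(G)e(G)$ in the general case and $|S|\le e(G)$ in the closed case; both desired inequalities then follow by multiplying by $\aleph_0$.

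The ``moreover'' part is immediate and I would dispose of it first: if $S$ is a closed suitable set, then $S$ is itself a closed discrete subspace of $G$, so $|S|\le e(G)$ directly by the definition of extent, and hence $d(G)\le|S|\cdot\aleph_0\le e(G)$.

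For the general case the key device is to slice $S$ by a pseudocharacter family at the identity. Put $\kappa=\psi(G)$ and fix open neighborhoods $\{U_\alpha:\alpha<\kappa\}$ of $e$ with $\bigcap_{\alpha<\kappa}U_\alpha=\{e\}$. The crucial observation is that each $S\setminus U_\alpha$ is a closed discrete subspace of $G$: since $e\in U_\alpha$, we have $S\setminus U_\alpha=(S\cup\{e\})\setminus U_\alpha=(S\cup\{e\})\cap(G\setminus U_\alpha)$, which is closed as the intersection of the closed set $S\cup\{e\}$ with the closed set $G\setminus U_\alpha$, while it is discrete as a subset of the discrete set $S$. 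Hence $|S\setminus U_\alpha|\le e(G)$ for every $\alpha$. Now every $s\in S$ with $s\ne e$ lies outside $\bigcap_\alpha U_\alpha=\{e\}$, so it belongs to some $S\setminus U_\alpha$; thus $S\setminus\{e\}=\bigcup_{\alpha<\kappa}(S\setminus U_\alpha)$, giving $|S|\le\kappa\cdot e(G)=\psi(G)e(G)$, and therefore $d(G)\le\psi(G)e(G)$.

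The argument is short and I do not expect a serious obstacle. The only point demanding care is the verification that each $S\setminus U_\alpha$ is genuinely closed and discrete, so that the extent bound applies; this rests on combining both defining properties of a suitable set, namely that $S\cup\{e\}$ is closed and that $S$ is discrete. One should also keep the convention $e(G)\ge\aleph_0$ in mind in order to absorb the degenerate case in which $S$ is finite.
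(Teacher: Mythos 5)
Your proposal is correct and follows essentially the same route as the paper: slice $S$ by a pseudocharacter family at $e$, note each $S\setminus U$ is closed and discrete so has size at most $e(G)$, and then bound $d(G)$ via $|\langle S\rangle|\le|S|\cdot\aleph_0$. Your extra care in verifying that $S\setminus U$ is closed (using that $S\cup\{e\}$ is closed and $e\in U$) and in writing out the ``moreover'' clause, which the paper dismisses as obvious, is exactly the right filling-in of the paper's terser argument.
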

\begin{proof} Since the second claim is obvious we prove only the first.
Let $S$ be a suitable set for $G$.
For each open neighborhood $U$ of the identity, $S\setminus U$ is a closed discrete
subspace of $G$, so $|S\setminus U|\le e(G)$.
Let $\gamma$ be a family of open subsets in $G$ such that
$\bigcap\gamma=\{e\}$ and $|\gamma|\le\psi(G)$.
Then $S\setminus\{e\}=\bigcup\{S\setminus U: U\in\gamma\}$, so $|S|\le \psi(G)e(G)$.
Since $S$ is a suitable set for $G$,
$\langle S\rangle$ is a dense subset of $G$ of size at most $\psi(G)e(G)$.
\end{proof}

\subsection{Extensions} The following question is a counterpart of Problem 3.12 from~\cite{DTT2000} for paratopological
groups.

\begin{question}
Let $G$ be a paratopological group and $H$ a closed normal subgroup of $G$.
If $H$ and $G/H$ have suitable sets, does $G$ have a suitable set?
\end{question}

{\bf Acknowledgments.} The authors thank to Igor Guran for consultations
and to Taras Banakh for sharing with them the book~\cite{BP2003} and other help.

\end{document}